\documentclass[12pt,reqno]{amsart}

\usepackage[backref]{hyperref}

\usepackage{amsmath}
\usepackage{amssymb}
\usepackage{amsthm}

\usepackage{hyperref}
\usepackage{bm}

\usepackage{color}
\usepackage{enumerate}
\usepackage{tikz,lmodern}
\usepackage{amsfonts}
\usepackage{longtable}
\usepackage{supertabular}
\usepackage{multirow}
\usepackage{booktabs}

\newtheorem{theorem}{Theorem}[section]
\newtheorem{lemma}[theorem]{Lemma}
\newtheorem{corollary}[theorem]{Corollary}

\newtheorem{problem}[theorem]{Problem}

\newtheorem{definition}[theorem]{Definition}

\newtheorem*{fact*}{Fact}

\newtheorem*{claim*}{Claim}

\newcommand{\R}{\mathbb{R}}

\DeclareMathOperator{\Tr}{\mathrm{Tr}}

\begin{document}

\title[Steklov eigenvalues of graphs]{A transfer principle for Steklov eigenvalue estimates of graphs}

\author{Xiongfeng Zhan,\ Jin-Xin Zhou}

\address{School of mathematics and statistics\\
Beijing Jiaotong University\\
Beijing \\
100044, P. R. China}
\address{Beijing Key Laboratory of Biological Big Data and Topological Statistics\\
Beijing Jiaotong University\\
Beijing\\
100044, P.R. China}
\email{zhanxfmath@163.com (Xiongfeng Zhan), jxzhou@bjtu.edu.cn (Jin-Xin Zhou)}\date{}
\maketitle

\begin{abstract}
In this paper, we establish a new variant of the Burger-Brooks transfer principle, which allows us to apply spectral estimates for measured Riemannian surfaces to obtain the following result: There exists a universal constant $C>0$ such that, for every connected graph $G=(V, E)$ with boundary $B$, maximum degree $d_{\max}$ and genus $g$,
    \[\sigma_k(G, B)\leq C d_{\max}\frac{g+k}{|B|},\]
where $1\leq k\leq |B|$ and $\sigma_k(G, B)$ denotes the $k$-th Steklov eigenvalue of $G$ with boundary $B$. This bound is sharp up to a universal constant, thereby resolving a problem raised by Lin and Zhao [J. Lond. Math. Soc. (2) 112 (2025), Paper No. e70238]. Furthermore, when $B=V$, the above result yields an upper
bound for the Laplacian eigenvalues of graphs,  improving the previously known
bounds of Kelner, Lee, Price and Teng
[Geom. Funct. Anal. 21 (2011), 1117--1143]
and Amini and Cohen-Steiner
[Comment. Math. Helv. 93 (2018), 203--223].
\end{abstract}

\bigskip
\noindent {\bf Key words:} Steklov eigenvalue, graphs on surfaces, measured Riemannian surface, transfer principle\\
\noindent {\bf 2020 Mathematics Subject Classification:} 05C10, 58J50, 47A75, 49J40, 49R05

\section{Introduction}

\subsection{The classical Steklov problem}
In 1902, Stekloff~\cite{Stekloff} considered the problem of liquid sloshing and introduced the Steklov eigenvalues for bounded domains in Euclidean spaces. Let $(M,\mathfrak{g})$ be a compact connected Riemannian manifold with nonempty boundary $\partial M$.
The classical Steklov problem asks for nonzero functions $f$ and real numbers $\sigma$ such that
\[\begin{cases}
	\Delta_{\mathfrak{g}} f=0 &  \text{in~} M, \\
	\partial_{\nu}f =\sigma f & \text{on~} \partial M,
  \end{cases}\]
where $\Delta_{\mathfrak{g}}$ denotes the Laplace Beltrami operator and $\partial_{\nu}f$ is the outward normal derivative of $f$ along the boundary. This problem can also be regarded as the eigenvalue problem of the Dirichlet-to-Neumann operator $\Lambda_{\mathfrak{g}}: H^\frac{1}{2}(\partial M)\rightarrow H^{-\frac{1}{2}}(\partial M)$, which is defined as
$$\Lambda_{\mathfrak{g}}\varphi:= \partial_{\nu}\hat{\varphi}|_{\partial M}, \forall \varphi\in H^\frac{1}{2}(\partial M),$$
where $\hat{\varphi}$ is the harmonic extension of $\varphi$ to $M$.
Since $\Lambda_{\mathfrak{g}}$ is a pseudo-differential operator, its spectrum is discrete and can be ordered as
\begin{align*}
	0 = \sigma_1(M,\mathfrak{g}) \leq \sigma_2(M,\mathfrak{g}) \leq \cdots
\end{align*}
These eigenvalues are called \emph{Steklov eigenvalues}, and $\sigma_2(M, \mathfrak{g})$ is called \emph{the first non-trivial
Steklov eigenvalue}. Different from the Dirichlet and Neumann problems, the eigenvalue $\sigma$ for the Steklov problem appears in the boundary condition. Over the past century, numerous papers have been devoted to the Steklov problem; see, for example, \cite{Bro01,Chen24,CG14,CGR18,Esc97,Esc99,FW17,FS19,GLS16,GP24,Kar17,KKP14,WX09,Wen54,YY17} and references therein.

\subsection{The graph Steklov problem}
A \textit{graph} $G$ is a pair of sets $(V, E)$, where $V$ is a finite set of elements called \textit{vertices}, and $E$ is a set of $2$-subsets of $V$ called \textit{edges}. The sets $V$ and $E$ are the \textit{vertex sets} and \textit{edge sets} of $G$, and are often denoted by $V(G)$ and $E(G)$, respectively. The \textit{order} of $G$ is the number of vertices of $G$. For convenience, an edge $e=\{u, v\}\in E$ is commonly written as $u\sim v$. The \textit{degree} $d_v$ of a vertex $v\in V$ is the number of edges incident to $v$. The maximum degree $d_{\max}$ of $G$ is $\max \{d_{v}:v\in V\}$. The \textit{(orientable) genus} $g$ of $G$ is the minimum genus of an (orientable) surface in which $G$ can be embedded. In this paper, all graphs are simple.

Let $G$ be a graph with vertex set $V$. The \textit{boundary} of $G$, denoted by $B(G)$, is chosen as an arbitrary nonempty subset of $V$, and we use $\Omega(G):=V\setminus B(G)$ to denote the set of rest vertices. Let $A$ be a finite nonempty set and let $\mathbb{R}^A$ denote the finite dimensional real vector space of functions from $A$ to $\mathbb{R}$, equipped with the standard inner product
\[\langle f, g\rangle=\sum_{x\in A}f(x)g(x).\]
For any $f\in \mathbb{R}^{V}$, the \textit{graph Laplacian $\Delta f$} is defined by
\[(\Delta f)(x)=\sum_{\{x,y\}\in E(G)}\bigl(f(x)-f(y)\bigr), \forall x\in V.\]
Note that $\Delta$ is a self-adjoint and positive semi-definite operator on $\mathbb{R}^{V}$.
The Steklov eigenvalue problem on $(G, B(G))$, introduced in \cite{HM20,HHW17}, is a discrete analogue of the classical Steklov problem. It aims to find nonzero functions $f\in \mathbb{R}^{V}$ and real numbers $\sigma\in \mathbb{R}$ such that
\begin{equation*}
	\begin{cases}
		\Delta f(x)=0, & x\in \Omega(G), \\
		\partial_vf(x)=\sigma f(x), & x\in B(G),
	\end{cases}
\end{equation*}
where $\partial_vf(x):=\sum\limits_{y\sim x} (f(x) - f(y))=(\Delta f)(x)$ for $x \in B(G)$. The real number $\sigma$ is called a \textit{Steklov eigenvalue} of the graph $G$ with boundary $B(G)$. Let $B=B(G)$. Then there are $|B|$ Steklov eigenvalues which can be ordered as
\[0=\sigma_1(G, B)\leq\sigma_2(G,B)\leq\cdots\leq\sigma_{|B|}(G, B),\]
where $\sigma_2(G, B)>0$ if and only if $G$ is connected.

Note that some authors (see, for example, \cite{LZ25a}) define $\partial_vf$ by
 \[\partial_v f(x):=\sum_{y\in\Omega(G),\,\{x,y\}\in E(G)} (f(x)-f(y)), \forall x\in B.\]
This difference does not affect the proof, because they only consider boundary sets $B$ that contain no edges of $G$. Our definition has an advantage, namely, it allows $B = V$, in which case the Steklov eigenvalues of $G$ with boundary $B$ coincide exactly with the Laplacian eigenvalues of $G$.

Following the pioneering work in \cite{HM20,HHW17}, the graph Steklov problem has been extensively studied in recent years by many authors, see for example, \cite{HH23,HH22b,HMW24,LZ25b,Per19,Per21,SY22,SY25,Tsc22,YY24,YY26}. In the extensive study of graph Steklov problem, much of the work has been focused on the upper bound estimates of the first nontrivial Steklov eigenvalue of graphs. For finite subgraphs in integer lattices, the upper bound of the first Steklov eigenvalue was given by Han and Hua in \cite{HH23}, where the authors also established a discrete analogue of Brock's result \cite{Bro01}. He and Hua \cite{HH22} obtained two types of bounds for the first Steklov eigenvalue of a tree $G$: $\sigma_2(G,B) \leq 4(d_{\max}-1)/|B|$ and $\sigma_2(G,B) \leq 2/L$, where $B$ and $L$ denote the boundary and diameter of the tree $G$, respectively. This was recently extended to block graphs by Lin and Zhao in \cite{LZ25a}, and moreover, Lin and Zhao in \cite{LZ25a} also obtained a beautiful result on the first Steklov eigenvalue of a planar graph $G$ with boundary $B$: $\sigma_2(G, B) \leq 8d_{\max}/|B|$. At the end of \cite{LZ25a}, Lin and Zhao pointed out that ``From geometric point of view, it is natural to ask what happens if we replace planar
graphs by genus $g$ graphs", and posed the following problem.

\begin{problem}[\rm{\cite[Problem 3.2]{LZ25a}}]\label{LZ-prob}
    What is the upper bound of Steklov eigenvalues for genus $g$ graphs with boundary $B$ when the degree is bounded above by $d_{\max}$?
\end{problem}

Let $G$ be a graph with boundary $B$ and genus $g$. The following partial results on the above problem have been proved:
\begin{itemize}
  \item $\sigma_2(G,B)\leq Cd_{\max}g^3/|B|$ in \cite{LLYZ26},
  \item $\sigma_2(G,B)\leq C(\mathrm{poly}(d_{\max}))g/|B|$ in \cite{CSZ26},
  \item $\sigma_k(G,B)\leq Cd_{\max}kg(\mathrm{log}~g)^2/|B| (1\leq k\leq |B|)$ in \cite{ZY26}.
\end{itemize}
In this paper, we shall give a solution of the above problem in general.

\subsection{The main results}

In spectral geometry, the Burger-Brooks transfer principle is a powerful method for comparing the spectrum of a continuous space, such as a Riemannian surface, with the spectrum of a graph obtained by discretizing that space. This principle was developed independently by Brooks \cite{Brooks86} and Burger \cite{Bur88}, and see \cite{BM04} for a random version of it. A remarkable application of the Burger-Brooks transfer principle to graph theory was developed by Amini and Cohen-Steiner \cite{AC18}. They established a graph-theoretic version of this principle and derived upper bounds for the normalized Laplacian eigenvalues of graphs in terms of their genus.

The first main result of this paper establishes a new variant of the Burger-Brooks transfer principle, which allows us to use the eigenvalues of measured Riemannian surfaces to estimate graph Steklov eigenvalues.

\begin{theorem}\label{thm:principle}
	There exist universal constants $C_1,C_2>0$ such that, for every connected graph $G$ with vertex set $V$ and genus $g$, for every nonempty subset $B\subseteq V$ and $1\leq k\leq |B|$, there exist:
    \begin{enumerate}[\rm (1)]
		\item  a closed smooth orientable Riemannian surface $(M,\mathfrak{g})$ of genus $g$ which depends only on $G$,
		\item and a finite nonzero Radon measure $\nu\ll dA_{\mathfrak{g}}$ depending on $G,B$ and $M$ with $\nu(M)=|B|$,
	\end{enumerate}
    satisfying that if $\lambda_k(M,\mathfrak{g},\nu)$, the $k$-th eigenvalue of $(M,\mathfrak{g},\nu)$, is bounded above by $C_1$, then $\sigma_k(G,B)\leq C_2d_{\max}\lambda_k(M,\mathfrak{g},\nu)$.
\end{theorem}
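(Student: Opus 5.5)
We follow the Amini--Cohen-Steiner scheme. The main novelty is that the measure on the surface is concentrated only near boundary vertices, and that a Steklov test space on the graph is built from a Laplacian test space on the surface.

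\emph{Geometric construction.} Fix a cellular embedding of $G$ in a closed orientable surface $S$ of genus $g$; such an embedding exists by the definition of genus together with the standard fact that a minimum-genus embedding of a connected graph is cellular. We then build a Riemannian metric $\mathfrak{g}$ on $M=S$ by gluing standard pieces. Around each vertex $v$ we place a disk $D_v$, isometric to a fixed model disk. Around each edge we place a flat strip $R_e$ of unit width and length. Each face is filled with a hyperbolic or flat polygon whose geometry is controlled by a bounded number of model pieces. To keep the geometry uniform, we first subdivide or thicken high-degree vertices and large faces, as Amini and Cohen-Steiner do. We then smooth the metric with uniform constants, so that the surface depends only on $G$. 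Finally we set $\nu=\sum_{b\in B}\frac{1}{|D_b|}\,dA_{\mathfrak{g}}|_{D_b}$. This measure is absolutely continuous and satisfies $\nu(M)=|B|$.

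\emph{Transfer of test functions.} By min-max, $\lambda_k(M,\mathfrak{g},\nu)$ is the infimum over $k$-dimensional subspaces $W\subset H^1(M)$ (nondegenerate in $L^2(\nu)$) of $\max_{f\in W}\int|\nabla f|^2/\int f^2\,d\nu$. On the graph side,
\[
\sigma_k(G,B)=\min_{\dim U=k}\ \max_{u\in U}\frac{\sum_{x\sim y}(u(x)-u(y))^2}{\sum_{b\in B}u(b)^2}.
\]
Take the optimal $W$ and define the discretization $\Pi f(v)=\frac{1}{|D_v|}\int_{D_v} f\,dA$, which gives $\Pi f\in\mathbb{R}^V$. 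We need two estimates.

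\textbf{(a) Energy bound.} For each edge $u\sim v$, apply a Poincaré inequality on the connected set $D_u\cup R_e\cup D_v$, which has uniformly bounded geometry. This gives
\[
(\Pi f(u)-\Pi f(v))^2\le C\int_{D_u\cup R_e\cup D_v}|\nabla f|^2.
\]
Summing over edges, each disk $D_v$ is counted $d_v$ times, so the graph energy is at most $C d_{\max}\int_M|\nabla f|^2$. This is exactly where the factor $d_{\max}$ enters.

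\textbf{(b) Mass bound.} We have $\sum_b \Pi f(b)^2$ compared with $\int f^2\,d\nu=\sum_b \frac{1}{|D_b|}\int_{D_b}f^2$. By Poincaré on $D_b$,
\[
\frac{1}{|D_b|}\int_{D_b} f^2 \le \Pi f(b)^2 + C\int_{D_b}|\nabla f|^2.
\]
Hence
\[
\sum_b \Pi f(b)^2\ \ge\ \int f^2\,d\nu - C\int|\nabla f|^2\ \ge\ (1-C\lambda_k)\int f^2\,d\nu.
\]
If $\lambda_k\le C_1:=1/(2C)$, this gives $\sum_b\Pi f(b)^2\ge\frac12\int f^2\,d\nu$. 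In particular $\Pi$ is injective on $W$, so $\Pi W$ is a $k$-dimensional test space, and the min-max yields $\sigma_k\le 2C d_{\max}\lambda_k$. The hypothesis $\lambda_k\le C_1$ is used precisely here.

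\emph{Main obstacle.} The delicate step is making the geometry uniform: the Poincaré constants for $D_u\cup R_e\cup D_v$ and for $D_b$ must be universal, independent of vertex degrees and face sizes. If $D_v$ has to accommodate $d_v$ incident strips, its size grows with the degree and the constants degrade. We would try the standard fix first: make $D_v$ a disk of fixed size with all strips attached along boundary arcs of length about $1/d_v$. The Poincaré inequality for $D_v\cup R_e$ then must go through a narrow neck. To handle this, we would instead compare $\Pi f(v)$ with the average over the arc where $R_e$ meets $D_v$, using a trace-type inequality. Alternatively, we would replace the graph by a bounded-degree subdivision, such as the vertex-blowup into a cycle of degree-$3$ vertices, and charge the resulting losses to $d_{\max}$.

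Faces only need to carry a metric and are disjoint from the sets where we integrate. Their genus contribution is fixed by the embedding, so any smooth filling works.
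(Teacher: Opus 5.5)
Your mass estimate (b), the injectivity of $\Pi$ on the test space, and the min--max transfer under the threshold $\lambda_k\le C_1$ match the paper's Lemma~\ref{lem::transfer2} and Theorem~\ref{thm::transfer}. The gap is in the energy estimate (a), which is the heart of the statement.

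As written, (a) uses a geometry that cannot exist: a vertex disk of fixed size cannot carry $d_v$ strips of unit width once $d_v$ is large. You flag this, but neither remedy is carried out, and your accounting does not survive the fix.

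\begin{itemize}
\item \emph{Narrow arcs with edge-by-edge Poincar\'e.} If strips of width $w\asymp 1/d_{\max}$ are attached along short boundary arcs, the Poincar\'e constant of $D_u\cup R_e\cup D_v$ is no longer universal. A function equal to $0$ on $D_u$, $1$ on $D_v$ and linear along a strip of unit length has energy $w$. So the per-edge constant is at least of order $d_{\max}$, and multiplying by your overcount (each $D_v$ used $d_v$ times) gives $d_{\max}^2$.
\item \emph{Edge-by-edge trace comparison.} The same loss occurs if you compare $\Pi f(v)$ with the arc average one edge at a time. Each comparison costs $w^{-1}\int_{\partial D_v}|\Tr f-\Pi f(v)|^2\,ds\le C_T w^{-1}\int_{D_v}|\nabla f|^2$, and there are $d_v$ of them.
\item \emph{Cycle blow-up.} A cycle of $d_v$ unit cells has spectral gap of order $d_v^{-2}$. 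So an argument that, like yours, uses only energy in vertex and edge regions again loses $d_{\max}^2$. This already happens for a star, with $f\approx\cos(2\pi j/d_v)$ on the $j$-th cell and constant along the $j$-th pendant edge: the region energy is $O(1/d_v)$ while the graph energy is at least $d_v/2$.
\end{itemize}

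The missing idea is to let $d_{\max}$ enter only through $w^{-1}$ and never to overcount a vertex region. The paper uses unit squares $Q_v$ and strips $R_e=[0,1]\times[0,w]$ with $w=1/(10d_{\max})$, glued along pairwise disjoint arcs $A_{v,e}\subset\partial Q_v$ of length $w$. Let $\phi_v$ be the average of $f$ on $Q_v$ and $p_{v,e}$ the average of $\Tr f$ on $A_{v,e}$. Cauchy--Schwarz on each arc, together with disjointness of the arcs, lets a single trace inequality per vertex control all incident edges at once:
\[
\sum_{e\ni v}w\bigl(\phi_v-p_{v,e}\bigr)^2\le\int_{\partial Q_v}|\Tr f-\phi_v|^2\,ds\le C_T\int_{Q_v}|\nabla f|^2\,dA .
\]
Averaging across the strip and integrating along it gives $w(p_{u,e}-p_{v,e})^2\le\int_{R_e}|\nabla f|^2\,dA$. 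Write $\phi_u-\phi_v$ as a sum of three such differences and sum over edges; every square and every strip is then used exactly once. This gives $\sum_{\{u,v\}\in E}(\phi_u-\phi_v)^2\le 3\max\{C_T,1\}\,w^{-1}\int_M|\nabla f|^2\,dA$, which is linear in $d_{\max}$. Your remark about comparing with arc averages via a trace inequality points this way, but what makes the bound linear is summing over the disjoint arcs before applying the trace inequality.

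A smaller point concerns smoothing. Instead of smoothing the metric ``with uniform constants'', the paper keeps the flat polyhedral metric $h$ and passes to a smooth conformal metric $\mathfrak g$ with $h=\rho\,\mathfrak g$. Since Dirichlet energy is conformally invariant in dimension two and $\nu\ll dA_h\ll dA_{\mathfrak g}$, all local estimates transfer without loss.
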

Our geometric construction and transfer mechanism are substantially different from those of Amini and Cohen-Steiner \cite{AC18}. Their method uses a two-fold cover of the surface and identifies the associated overlap operator with a weighted normalized graph Laplacian. In contrast, we thicken the vertices and edges of the original graph into vertex regions and edge strips and transfer the graph Dirichlet energy directly. Our construction is naturally adapted to the graph Steklov problem.
Combining it with Hassannezhad's upper bound \cite{Has11} on Riemannian surfaces, we obtain a general Steklov eigenvalue estimate for graphs in terms of their geometric genus.
\begin{theorem}\label{thm::main}
    There exists a universal constant $C>0$ such that, for every connected graph $G$ with boundary $B$ and genus $g$,
	\[\sigma_k(G,B)\leq C d_{\max}\frac{g+k}{|B|},\]
    where $1\leq k\leq |B|$, and $\sigma_k(G, B)$ is the $k$-th Steklov eigenvalue of the graph $G$ with boundary $B$.
\end{theorem}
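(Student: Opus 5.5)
The plan is to derive Theorem~\ref{thm::main} directly from Theorem~\ref{thm:principle} together with Hassannezhad's upper bound \cite{Has11} for eigenvalues of measured Riemannian surfaces. Hassannezhad's theorem states that there are universal constants $A,B_0>0$ such that for every closed orientable Riemannian surface $(M,\mathfrak{g})$ of genus $g$ and every finite Radon measure $\nu\ll dA_{\mathfrak{g}}$,
\[
\lambda_k(M,\mathfrak{g},\nu)\leq A\,\frac{\mathrm{Area}_{\mathfrak g}(M)}{\nu(M)}\,\bigl(\ldots\bigr)+B_0\frac{g+k}{\nu(M)}\cdot(\ldots),
\]
and, in its conformally invariant form, $\lambda_k(M,\mathfrak{g},\nu)\nu(M)\le C'(g+k)$ for any $\nu$ absolutely continuous with respect to area. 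I will use it in the form $\lambda_k(M,\mathfrak g,\nu)\leq C'\,(g+k)/\nu(M)$, with $C'$ universal. If that exact form is not available, I will first reduce to it by pulling $\nu$ back through the conformal class. This is legitimate because $\lambda_k$ of $(M,\mathfrak g,\nu)$ depends only on the conformal class of $\mathfrak g$ and on $\nu$, since the Dirichlet energy is conformally invariant in dimension two.

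Fix $G$, $B$ and $k$, and take the surface $(M,\mathfrak g)$ of genus $g$ and the measure $\nu$ with $\nu(M)=|B|$ supplied by Theorem~\ref{thm:principle}. Hassannezhad's bound then gives
\[
\lambda_k(M,\mathfrak g,\nu)\le C'\frac{g+k}{|B|}.
\]
Now split into two cases according to the threshold $C_1$ in Theorem~\ref{thm:principle}.

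\emph{Case 1: $\lambda_k(M,\mathfrak g,\nu)\le C_1$.} Theorem~\ref{thm:principle} gives
\[
\sigma_k(G,B)\le C_2 d_{\max}\lambda_k\le C_2C'd_{\max}\frac{g+k}{|B|}.
\]

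\emph{Case 2: $\lambda_k(M,\mathfrak g,\nu)>C_1$.} Then $C'(g+k)/|B|>C_1$, that is, $(g+k)/|B|>C_1/C'$. Here I use the trivial bound on Steklov eigenvalues. By the Rayleigh quotient characterization,
\[
\sigma_k(G,B)=\min_{\dim W=k}\,\max_{f\in W}\frac{\sum_{x\sim y}(f(x)-f(y))^2}{\sum_{x\in B}f(x)^2},
\]
where $W$ ranges over subspaces of functions harmonic on $\Omega$, and one may equivalently minimize over all extensions. Each $\sigma_k$ is at most $\sigma_{|B|}$, which is at most $2d_{\max}$. To see this, take $f$ supported on $B$: then $\sum_{x\sim y}(f(x)-f(y))^2\le 2\sum_x d_x f(x)^2\le 2d_{\max}\sum_{x\in B}f(x)^2$, and the space of functions supported on $B$ is $|B|$-dimensional, so the min-max over non-harmonic test functions (the energy minimum is attained by the harmonic extension) gives $\sigma_k\le 2d_{\max}$. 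Hence
\[
\sigma_k(G,B)\le 2d_{\max}<\frac{2C'}{C_1}\,d_{\max}\frac{g+k}{|B|}.
\]

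Taking $C=\max\{C_2C',\,2C'/C_1\}$ finishes the proof. The main obstacle is invoking Hassannezhad's bound in exactly the measure-normalized form $\lambda_k\nu(M)\le C'(g+k)$ for arbitrary absolutely continuous Radon measures. I would check that \cite{Has11} covers this: her result is stated for $\lambda_k(M,[\mathfrak g],\mu)$ with $\mu$ a measure, and the bound there is $\lambda_k\,\mu(M)\le A\,V([\mathfrak g])+B k$ with the conformal volume term controlled by genus. If the measure version needs regularity, I would approximate $\nu$ by smooth densities and use continuity of $\lambda_k$. The case split handles the threshold $C_1$ cleanly.
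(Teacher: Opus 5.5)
Your argument is correct and is essentially the paper's proof. You combine the measured Hassannezhad bound $\lambda_k(M,\mathfrak g,\nu)\,\nu(M)\le A(g+k)$ (with $\nu(M)=|B|$) with the transfer principle below the threshold, and fall back on $\sigma_k(G,B)\le 2d_{\max}$ above it; you verify that bound directly via the zero extension, whereas the paper cites it. The only cosmetic difference is that you split on $\lambda_k\le C_1$ rather than on $(g+k)/|B|\le (3AC_P)^{-1}$, and both splits give the same estimate.
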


The upper bound given in Theorem~\ref{thm::main} is sharp up to a constant factor $C$:
the linear dependence on the maximum degree is clearly optimal, as shown by star graphs; the $\frac{g+k}{|B|}$ dependence is also tight, as demonstrated by the examples in \cite[Remark 2.6]{AC18}. Theorem~\ref{thm::main} also extends the upper bound for the first nontrivial Steklov eigenvalues of planar graphs obtained by Lin and Zhao \cite{LZ25a} to all Steklov eigenvalues of graphs of genus $g$. In particular, it completely resolves Problem~\ref{LZ-prob}.

By taking $B=V$ in Theorem \ref{thm::main}, we obtain the following result which provides an upper bound for the Laplacian eigenvalues of graphs.

\begin{corollary}\label{cor1}
There exists a universal constant $C>0$ such that, for every connected graph $G$ of order $n$ and genus $g$,
\[\lambda_k(G)\leq C d_{\max}\frac{g+k}{n},\]
where $1\leq k\leq n$ and $\lambda_k(G)$ denotes the $k$-th Laplacian eigenvalue of $G$.
\end{corollary}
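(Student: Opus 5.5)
Corollary~\ref{cor1} is the special case $B=V$ of Theorem~\ref{thm::main}, once we check that the Steklov spectrum with full boundary coincides with the Laplacian spectrum. With our convention $\partial_v f(x)=(\Delta f)(x)$ for $x\in B$, taking $B=V$ gives $\Omega(G)=\emptyset$. The harmonicity condition $\Delta f(x)=0$ on $\Omega(G)$ is then vacuous, and the Steklov problem reduces to $\Delta f(x)=\sigma f(x)$ for all $x\in V$. This is exactly the eigenvalue problem for the graph Laplacian $\Delta$ on $\mathbb{R}^V$. Since there are $|B|=n$ Steklov eigenvalues and $n$ Laplacian eigenvalues, both listed in nondecreasing order with multiplicity, we get $\sigma_k(G,V)=\lambda_k(G)$ for every $1\le k\le n$.

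To make the multiplicities match rigorously, I would compare the variational characterizations rather than only the eigenvalue equations. The Steklov eigenvalues are given by the min-max of the Rayleigh quotient
\[
\frac{\sum_{\{x,y\}\in E}(f(x)-f(y))^2}{\sum_{x\in B}f(x)^2}
\]
over $k$-dimensional subspaces of functions on $B$, each function extended harmonically to $\Omega(G)$. When $B=V$ no extension is needed, and this is precisely the Courant--Fischer characterization of $\lambda_k(G)$ for $\Delta$.

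Next, apply Theorem~\ref{thm::main} with $B=V$. The set $V$ is a nonempty subset of $V$, $G$ is connected, and $|B|=n$, so the theorem gives
\[
\lambda_k(G)=\sigma_k(G,V)\le C d_{\max}\frac{g+k}{n}, \qquad 1\le k\le n,
\]
with the same universal constant $C$.

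The only point needing care is that the paper's convention for $\partial_v$ sums over all neighbours, not only those in $\Omega(G)$. This convention is what makes $B=V$ admissible in Theorem~\ref{thm::main}, and the introduction notes that it is used throughout.
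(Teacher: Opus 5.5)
Your proposal is correct and matches the paper: the corollary is obtained by taking $B=V$ in Theorem~\ref{thm::main}, since with the paper's convention $\partial_v f=(\Delta f)|_B$ the Dirichlet-to-Neumann operator with $\Omega=\emptyset$ is just $\Delta$, so $\sigma_k(G,V)=\lambda_k(G)$. Your check via \eqref{eq::minmax R}, where no harmonic extension is needed and the min-max becomes the Courant--Fischer characterization of $\lambda_k(G)$, confirms that the eigenvalues agree with multiplicity.
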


We note that Corollary~\ref{cor1} provides an improvement of a similar upper bound given by Kelner, Lee, Price and Teng in \cite{KLPT11}, where they proved that the $k$-th Laplacian eigenvalue of a graph $G$ of order $n$ and genus $g$ satisfies
\[
\lambda_k(G) \leq C d_{\max} \frac{(g+1)k\log^2(g+1)}{n},
\]
where $C$ is a universal constant.

Denote by $\lambda_k^{\rm{nr}}(G)$ the $k$-th normalized Laplacian eigenvalue of a graph $G$, and let $\delta=\max\{d_{\min}, 1\}$, where $d_{\min}$ is the minimum degree of $G$. Due to $\lambda_k^{\rm{nr}}(G)\leq\lambda_k(G)/\delta$, from Corollary~\ref{cor1} we immediately have
the following result, which improves Amini and Cohen-Steiner's bound for $\lambda_k^{\rm{nr}}(G)$ given in \cite[Theorem 1.1]{AC18} by the factor $1/\delta$.

\medskip
\begin{corollary}~\label{cor2}
There exists a universal constant $C>0$ such that, for every connected graph $G$ of order $n$ and genus $g$,
\[\lambda_k^{\rm{nr}}(G)\leq C \frac{d_{\max}}{\delta}\frac{g+k}{n},\]
where $1\leq k\leq n$.
\end{corollary}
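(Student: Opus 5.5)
Corollary~\ref{cor2} follows from Corollary~\ref{cor1} once we have the comparison $\lambda_k^{\rm nr}(G)\le \lambda_k(G)/\delta$. So the plan is to prove that comparison with the Courant--Fischer min-max principle and then substitute the bound from Corollary~\ref{cor1}.

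\textbf{Step 1: the two Rayleigh quotients.} Write $D=\mathrm{diag}(d_v)$ and $\mathcal{L}=D^{-1/2}\Delta D^{-1/2}$ for the normalized Laplacian. Substituting $f=D^{-1/2}h$, the Rayleigh quotient of $\mathcal{L}$ becomes
\[\frac{\sum_{u\sim v}(h(u)-h(v))^2}{\sum_{v} d_v h(v)^2}.\]
This is the same numerator as for $\Delta$, with the weighted denominator $\sum_v d_v h(v)^2$ in place of $\sum_v h(v)^2$. Since $D^{-1/2}$ is invertible, it maps $k$-dimensional subspaces bijectively onto $k$-dimensional subspaces. Hence the min-max characterization of $\lambda_k^{\rm nr}$ can be written over subspaces of $h$.

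\textbf{Step 2: the comparison.} If $n=1$, both eigenvalues are $0$ and there is nothing to prove. Otherwise $G$ is connected with $n\ge 2$, so $d_{\min}\ge 1$ and $\delta=d_{\min}$. Then $\sum_v d_v h(v)^2\ge \delta\sum_v h(v)^2$ for every $h$, so each weighted Rayleigh quotient is at most $1/\delta$ times the unweighted one. Take the $k$-dimensional subspace attaining $\lambda_k(G)$ in the min-max formula. On that subspace the maximum of the weighted quotient is at most $\lambda_k(G)/\delta$. Therefore $\lambda_k^{\rm nr}(G)\le \lambda_k(G)/\delta$.

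\textbf{Step 3: conclusion.} Apply Corollary~\ref{cor1} to obtain
\[\lambda_k^{\rm nr}(G)\le \frac{C}{\delta}\, d_{\max}\frac{g+k}{n},\]
with the same universal constant $C$.

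There is no real obstacle here; the only point to check is that the change of variables $f=D^{-1/2}h$ preserves subspace dimension, which holds because $D^{-1/2}$ is invertible.
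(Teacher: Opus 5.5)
Your proof is correct and follows the paper's route: the paper also obtains Corollary~\ref{cor2} directly from Corollary~\ref{cor1} via $\lambda_k^{\rm nr}(G)\le\lambda_k(G)/\delta$, a comparison the paper states without proof and which you justify via Courant--Fischer. One small slip: the substitution that produces the weighted quotient is $f=D^{1/2}h$ (i.e.\ $h=D^{-1/2}f$), not $f=D^{-1/2}h$, but this does not affect the argument since the map is invertible either way.
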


\medskip
The remainder of the paper is organized as follows. In Section~\ref{section::2}, we recall the elementary facts needed for the proof of the main results. In Section~\ref{sec:3}, we construct a smooth conformal Riemannian metric on an oriented Euclidean polyhedral surface, and in Section~\ref{sec:4}, we construct Euclidean polyhedral surfaces from graphs and develop our transfer principle. In Section~\ref{sec:5}, we prove Theorem~\ref{thm:principle}, and in Section~\ref{sec:6}, we prove Theorem~\ref{thm::main}.

\section{Preliminaries}\label{section::2}

In this section, we review the basic facts that will be used in the paper.

\subsection{The Steklov eigenvalues of graphs}

Let $G=(V, E)$ be a connected graph with boundary $B$ and $f$ be a function in $\mathbb{R}^V$. Then
\begin{equation*}
	\langle\Delta f,f\rangle
	=\sum_{\{x,y\}\in E}\bigl(f(x)-f(y)\bigr)^2.
\end{equation*}
Given $\varphi\in \mathbb{R}^B$, the \textit{harmonic extension} of $\varphi$ is the unique function $\widehat{\varphi}\in\R^V$ \[\widehat{\varphi}|_B=\varphi \text{~and~} \Delta \widehat{\varphi}=0~\text{on }\Omega.\]
Equivalently,
\begin{equation}\label{eq::Detla hatf,f}
	\langle\Delta \widehat{\varphi},\widehat{\varphi}\rangle
	=\min\bigl\{\langle\Delta f,f\rangle:f\in \mathbb{R}^V,~f|_B=\varphi\bigr\}.
\end{equation}
The \textit{graph Dirichlet-to-Neumann operator} $\Lambda_{G,B}: \R^B\to\R^B$
is defined by
\[\Lambda_{G,B}\varphi=(\Delta\widehat{\varphi})|_B, \forall \varphi\in \R^B.\]

Let $\Omega=\Omega(G)$. With respect to the decomposition $V=B\cup \Omega$, write graph Laplacian $\Delta$ as the block matrix
\[\begin{pmatrix}
	 \Delta_{BB} & \Delta_{B\Omega} \\ \Delta_{\Omega B} & \Delta_{\Omega\Omega}
	 \end{pmatrix}.\]
Since $G$ is connected and $B\neq\emptyset$, the grounded Laplacian $\Delta_{\Omega\Omega}$ is positive definite. Therefore, we have
\begin{equation}\label{eq::DtN}
	\Lambda_{G,B}=\Delta_{BB}-\Delta_{B\Omega}\Delta_{\Omega\Omega}^{-1}\Delta_{\Omega B}.
\end{equation}
By definition, the eigenvalues of $\Lambda_{G,B}$ are precisely the Steklov eigenvalues of the graph $G$ with boundary $B$. Since $\Lambda_{G,B}$ is self-adjoint and positive semi-definite, the Courant-Fischer theorem gives
\begin{equation}\label{eq::minmax R}
	\sigma_k(G,B)
	=\min_{\substack{W\subseteq\R^B\\ \dim W=k}}
	\ \max_{\varphi\in W\setminus\{0\}}
	\frac{\langle\Delta \widehat{\varphi},\widehat{\varphi}\rangle}
	{\sum_{b\in B}\varphi(b)^2}.
\end{equation}

To end this subsection, we record the following elementary upper bound for the Steklov eigenvalues of graphs; see, for example, \cite[Lemma 2.1]{ZY26}.

\begin{lemma}\label{lem::2D}
    Let $G$ be a connected graph with boundary $B$ and maximum degree $d_{\max}$. Then, for every $1\leq k\leq |B|$, we have $\sigma_k(G,B)\leq 2d_{\max}$.
\end{lemma}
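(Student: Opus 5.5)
The aim is to prove Lemma~\ref{lem::2D} by bounding the largest eigenvalue $\sigma_{|B|}(G,B)$ and using the ordering $\sigma_k\le\sigma_{|B|}$. By \eqref{eq::minmax R} with $W=\R^B$ (the case $k=|B|$), it suffices to show that for every nonzero $\varphi\in\R^B$,
\[
\langle\Delta\widehat{\varphi},\widehat{\varphi}\rangle\le 2d_{\max}\sum_{b\in B}\varphi(b)^2 .
\]
Then the max over $W=\R^B$ is at most $2d_{\max}$, so $\sigma_{|B|}\le 2d_{\max}$, and hence $\sigma_k\le 2d_{\max}$ for every $k$.

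To bound the harmonic energy, use the variational characterization \eqref{eq::Detla hatf,f}: $\langle\Delta\widehat\varphi,\widehat\varphi\rangle\le\langle\Delta f,f\rangle$ for any $f$ with $f|_B=\varphi$. Choose the zero extension, $f=\varphi$ on $B$ and $f=0$ on $\Omega$.

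Each edge contributes as follows:
\begin{itemize}
\item an edge inside $\Omega$ contributes $0$;
\item an edge $\{b,y\}$ with $b\in B$, $y\in\Omega$ contributes $\varphi(b)^2$;
\item an edge $\{b,b'\}$ inside $B$ contributes $(\varphi(b)-\varphi(b'))^2\le 2\varphi(b)^2+2\varphi(b')^2$.
\end{itemize}
In every case the contribution is at most $2\bigl(f(x)^2+f(y)^2\bigr)$. Summing over edges gives
\[
\langle\Delta f,f\rangle\le 2\sum_{x\in V}d_x f(x)^2\le 2d_{\max}\sum_{b\in B}\varphi(b)^2,
\]
since $f$ vanishes off $B$.

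This is the whole argument, and no step is a real obstacle. The one point to check is that \eqref{eq::minmax R} applies with the full space $\R^B$, which it does because $k=|B|$ is allowed. (One could alternatively invoke Courant–Fischer monotonicity directly for each $k$, but that is unnecessary.)
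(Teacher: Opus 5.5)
Your proof is correct and complete. The paper does not prove this lemma; it only cites \cite[Lemma 2.1]{ZY26}, so there is no in-paper argument to compare against. Your argument is the standard self-contained one: take $W=\R^B$ in \eqref{eq::minmax R}, use the Dirichlet principle \eqref{eq::Detla hatf,f} with the zero extension of $\varphi$, bound each edge by $2\bigl(f(x)^2+f(y)^2\bigr)$, and finish with $\sigma_k\le\sigma_{|B|}$.

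An equivalent way to see it is through \eqref{eq::DtN}. The zero extension $f=(\varphi,0)$ gives exactly $\langle\Delta f,f\rangle=\langle\Delta_{BB}\varphi,\varphi\rangle$. So your inequality is the variational form of $\Lambda_{G,B}\preceq\Delta_{BB}$, which holds because the subtracted Schur-complement term $\Delta_{B\Omega}\Delta_{\Omega\Omega}^{-1}\Delta_{\Omega B}$ is positive semidefinite. It is then followed by a Gershgorin-type bound: the largest eigenvalue of $\Delta_{BB}$ is at most $2d_{\max}$.

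Your edge-by-edge count actually gives slightly more. It yields $\sigma_k(G,B)\le\max_{b\in B}\bigl(d_b+d_b^{B}\bigr)$, where $d_b^{B}$ is the number of neighbours of $b$ lying in $B$. This is at most $2d_{\max}$, as required.
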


\subsection{Trace theorem and Poincar\'e  inequality}
We refer to \cite[Chapter 5]{Evans10} for the basic theory of Sobolev spaces, weak derivatives, trace operators, and Poincar\'e inequalities.
Let $\Omega\subset \mathbb{R}^2$ be a bounded connected Lipschitz domain. The \textit{Sobolev space} $H^1(\Omega)$ consists of all functions $f\in  L^2(\Omega)$ whose first-order weak derivatives belong to $L^2(\Omega)$. Denote by $dA$  the $2$-dimensional area measure on $\Omega$ and  $ds$ the $1$-dimensional arclength measure along the boundary $\partial\Omega$. The \textit{fractional Sobolev space} $H^{1/2}(\partial\Omega)$ consists of all functions $g\in  L^2(\partial\Omega)$ such that
\begin{equation*}
    \int_{\partial\Omega}\int_{\partial\Omega}\frac{|g(x)-g(y)|^2}{|x-y|^2}\,ds(x)ds(y)<\infty.
\end{equation*}

\begin{lemma}[Trace theorem]\label{lem::Trace}
    There exists a unique bounded linear operator
	    \[\operatorname{Tr}:H^1(\Omega)\longrightarrow H^{1/2}(\partial\Omega)\]
    such that $\Tr f=f|_{\partial\Omega}$ for every $f\in C^\infty(\overline{\Omega})$. In particular, there is a constant $C_{tr}(\Omega)>0$ such that
	\begin{equation*}\label{eq:: trace inequality}
		\int_{\partial\Omega}|\Tr f|^2\,ds\leq C_{tr}(\Omega)\left(\int_{\Omega}|f|^2\,dA+\int_{\Omega}|\nabla f|^2\,dA \right)
	\end{equation*}
    for every $f\in H^1(\Omega)$.
\end{lemma}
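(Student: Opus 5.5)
The plan is the standard density-and-extension argument, done first for smooth functions and then passed to $H^1(\Omega)$. Uniqueness is immediate once existence is known. For a bounded Lipschitz domain, $C^\infty(\overline{\Omega})$ is dense in $H^1(\Omega)$; this follows from the segment condition, via local translation inward along the Lipschitz graph direction followed by mollification, as in \cite[Chapter 5]{Evans10}. Two bounded operators that agree on a dense subspace therefore coincide. Existence will follow from an a priori estimate $\|f|_{\partial\Omega}\|_{H^{1/2}(\partial\Omega)}\le C\|f\|_{H^1(\Omega)}$ for $f\in C^\infty(\overline{\Omega})$, after which $\Tr$ is defined as the unique continuous extension.

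\emph{The $L^2$ inequality.} I would avoid local charts by using a transversal vector field. Since $\partial\Omega$ is Lipschitz and compact, there is a smooth vector field $X$ on $\mathbb{R}^2$ and a constant $\kappa>0$ with $X\cdot n\ge \kappa$ for $ds$-a.e.\ point of $\partial\Omega$, where $n$ is the outward unit normal. To build it, cover $\partial\Omega$ by finitely many boxes in which $\Omega$ lies below a Lipschitz graph, take in each box the constant upward vertical field, and glue these with a smooth partition of unity. Applying the divergence theorem, valid on Lipschitz domains, to $|f|^2X$ gives
\[
\kappa\int_{\partial\Omega}|f|^2\,ds\le\int_{\partial\Omega}|f|^2\,X\cdot n\,ds=\int_\Omega\bigl(|f|^2\operatorname{div}X+2f\,\nabla f\cdot X\bigr)\,dA .
\]
By Cauchy--Schwarz, the right-hand side is at most $C(\|f\|_{L^2}^2+\|\nabla f\|_{L^2}^2)$. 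This yields the displayed inequality with $C_{tr}(\Omega)$ depending on $\|X\|_{C^1}$ and $\kappa$.

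\emph{The $H^{1/2}$ seminorm.} This is the main obstacle. I would localize with the same partition of unity $\{\psi_j\}$. For the off-diagonal part, $|x-y|$ is bounded below when $x,y$ lie in separated patches, so that contribution is controlled by the $L^2$ bound above. On a single patch, I would pull back by the bi-Lipschitz map $(s,t)\mapsto(s,\gamma(s)-t)$. This is comparable in both arclength and distances, so it reduces the problem to $u\in H^1$ of a half-strip $(a,b)\times(0,h)$, with the goal of bounding $\iint |u(s,0)-u(s',0)|^2|s-s'|^{-2}\,ds\,ds'$. Set $r=|s-s'|$ and let $m=\frac{s+s'}{2}$ be the midpoint. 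Write $u(s,0)-u(s',0)$ as the sum of two differences, $u(s,0)-u(m,r)$ and $u(m,r)-u(s',0)$, each taken along a straight segment of length comparable to $r$. Each difference is bounded by the integral of $|\nabla u|$ along its segment. Applying Hardy's inequality, or equivalently averaging over a cone of such paths and using Cauchy--Schwarz in the radial variable, then bounds the double integral by $C\|\nabla u\|_{L^2}^2$. Summing over patches, and absorbing the terms coming from derivatives of $\psi_j$ into $\|f\|_{L^2}$, gives the full $H^{1/2}$ bound. Density then extends it to all of $H^1(\Omega)$.
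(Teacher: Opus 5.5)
Your proposal is correct. It differs from the paper mainly in that the paper gives no proof of this lemma at all: it simply cites the standard Sobolev theory in Chapter~5 of Evans.

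\emph{How your steps relate to the standard ones.} Your $L^2$ step (a smooth field $X$ with $X\cdot n\ge\kappa$, and the divergence theorem applied to $|f|^2X$) is a coordinate-free version of Evans's argument. Evans flattens the boundary and integrates $\partial_{x_n}(\zeta|u|^2)$ in the normal direction. Your $H^{1/2}$ step is Gagliardo's classical argument, and it works as stated. For fixed $s$ and $s'>s$, all your segments from $(s,0)$ to $(m,r)$ lie on the single ray from $(s,0)$ in direction $(1/2,1)$. So the one-dimensional Hardy inequality, combined with the area-preserving substitution $(s,t)\mapsto(s+t/2,t)$, already closes the estimate; no averaging over cones is needed.

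\emph{What your route buys.} It covers the case the paper actually uses. Evans states his density and trace theorems only for domains with $C^1$ boundary, and his trace lands only in $L^p(\partial U)$. The paper applies the lemma to the squares $Q_v$ and rectangles $R_e$, which are Lipschitz but not $C^1$, and it asserts the range $H^{1/2}(\partial\Omega)$. Your argument handles both. In fact the paper only ever uses the $L^2$ inequality (via Lemma~\ref{lem::T leq}), so your first step alone suffices for its purposes.

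\emph{What the citation buys.} Besides brevity, Evans gives the identification $Tu=u|_{\partial U}$ for all $u\in W^{1,p}(U)\cap C(\overline U)$, not just for smooth $u$. The paper implicitly needs this in Lemma~\ref{lem::leq R_e}: near cone points on $\partial Q_v$, a function in $C^\infty(M)$ is continuous but in general not $C^1$ up to $\partial Q_v$ in Euclidean coordinates. Your density step also yields this identification, because inward translation followed by mollification converges uniformly on $\overline\Omega$ when $f$ is continuous there.

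\emph{One detail to tidy.} On the finite strip $(a,b)\times(0,h)$, the apex $(m,r)$ leaves the strip once $|s-s'|\ge h$. Either bound those pairs by the $L^2$ trace estimate (they contribute at most $4(b-a)h^{-2}\|u(\cdot,0)\|_{L^2(a,b)}^2$), or use apex height $\theta r$ with a fixed $\theta\le h/(b-a)$, which only changes the constant.
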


For $f\in H^1(\Omega)$, define the average of $f$ over $\Omega$ by
\[\overline{f}=\frac{1}{|\Omega|}\int_\Omega f\,dA,\]
where $|\Omega|=dA(\Omega)$.
Then the following Poincar\'e inequality holds.

\begin{lemma}[Poincar\'e inequality]\label{lem::P leq}
There exists a constant $C_P(\Omega)>0$ such that
	\begin{equation*}
		\int_{\Omega}|f-\bar{f}|^2\,dA\leq C_P(\Omega)\int_{\Omega}|\nabla f|^2\,dA
	\end{equation*}
for every $f\in H^1(\Omega)$.
\end{lemma}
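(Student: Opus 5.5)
The plan is the standard compactness argument, by contradiction. By subtracting the mean and rescaling, it suffices to show there is a constant $C>0$ with
\[\int_\Omega |f|^2\,dA\leq C\int_\Omega|\nabla f|^2\,dA\quad\text{for all } f\in H^1(\Omega) \text{ with } \overline{f}=0 .\]
Suppose no such constant exists. Then for every $n$ there is $f_n\in H^1(\Omega)$ with $\overline{f_n}=0$ and $\|f_n\|_{L^2(\Omega)}=1$ such that $\|\nabla f_n\|_{L^2(\Omega)}^2<1/n$. In particular, $(f_n)$ is bounded in $H^1(\Omega)$.

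The key step is compactness. Since $\Omega$ is a bounded Lipschitz domain, it admits a bounded extension operator $H^1(\Omega)\to H^1(\mathbb{R}^2)$. We then apply the Rellich--Kondrachov theorem (see \cite[Chapter 5]{Evans10}): the embedding $H^1(\Omega)\hookrightarrow L^2(\Omega)$ is compact. Passing to a subsequence, we may assume $f_n\to f$ strongly in $L^2(\Omega)$ and $f_n\rightharpoonup f$ weakly in $H^1(\Omega)$. Strong $L^2$ convergence gives $\|f\|_{L^2(\Omega)}=1$. Since $\Omega$ is bounded, $L^2$ convergence implies $L^1$ convergence, so $\overline{f}=\lim_n\overline{f_n}=0$.

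Next we identify the limit. For every test function $\phi\in C_c^\infty(\Omega)$ and $i=1,2$,
\[\int_\Omega f\,\partial_i\phi\,dA=\lim_{n\to\infty}\int_\Omega f_n\,\partial_i\phi\,dA=-\lim_{n\to\infty}\int_\Omega \partial_i f_n\,\phi\,dA=0,\]
because $\|\nabla f_n\|_{L^2}\to0$. Hence the weak gradient of $f$ vanishes. A function on a connected open set with zero weak gradient is almost everywhere equal to a constant; this follows, for instance, by mollifying on compactly contained subdomains and then using connectedness. Since $\overline{f}=0$, that constant is $0$, so $f=0$. This contradicts $\|f\|_{L^2(\Omega)}=1$, and therefore the constant $C_P(\Omega)$ exists.

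The only nontrivial ingredient is the compact embedding $H^1(\Omega)\hookrightarrow L^2(\Omega)$. This is exactly where the Lipschitz regularity of $\partial\Omega$ is needed, through the extension operator. For the polygonal and strip-shaped domains the paper uses later, this is classical and can be cited directly from \cite{Evans10}. Note that the argument gives no explicit value of $C_P(\Omega)$, only its existence. This suffices here, because the later constructions presumably use finitely many fixed model domains, so the resulting constants are universal.
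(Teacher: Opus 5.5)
Your proof is correct and is essentially the paper's approach: the paper gives no argument of its own and simply cites \cite[Chapter 5]{Evans10}, whose proof is exactly your compactness-by-contradiction via Rellich--Kondrachov. One small citation nuance: Evans states the extension and Rellich--Kondrachov theorems for $C^1$ boundaries, so for a general Lipschitz domain you need Stein's (or Calder\'on's) extension theorem. For the unit square, which is the only domain where the paper actually uses $C_P$, even reflection across the sides already gives the extension.
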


The following lemma follows immediately from the trace theorem and the Poincar\'e inequality.

\begin{lemma}\label{lem::T leq}
    There exists a constant $C_T(\Omega)>0$ such that
    \[\int_{\Gamma}|\operatorname{Tr} f-\overline f|^2\,ds\leq C_T(\Omega)\int_{\Omega}|\nabla f|^2\,dA\]
    for every measurable $\Gamma\subset\partial\Omega$ and every $f\in H^1(\Omega)$.
\end{lemma}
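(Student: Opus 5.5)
Write $\overline f=\frac{1}{|\Omega|}\int_\Omega f\,dA$ for the mean of $f$ over $\Omega$. The plan is to insert the function $\overline f$ between $\Tr f$ and $f$: first bound the trace of $f-\overline f$ on the boundary by Lemma~\ref{lem::Trace}, then bound the resulting $L^2$ term by Lemma~\ref{lem::P leq}. Two observations are used along the way. First, $\Tr$ is linear and $\Tr c = c$ for any constant $c$ (constants lie in $C^\infty(\overline\Omega)$). Second, $|\Omega|<\infty$ because $\Omega$ is bounded, so $\overline f$ is well defined for every $f\in H^1(\Omega)$.

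Fix $f\in H^1(\Omega)$ and put $h=f-\overline f\in H^1(\Omega)$. Then $\nabla h=\nabla f$ and $\Tr h=\Tr f-\overline f$. For any measurable $\Gamma\subset\partial\Omega$ the integrand is nonnegative, so
\[\int_\Gamma|\Tr f-\overline f|^2\,ds\le\int_{\partial\Omega}|\Tr h|^2\,ds .\]
Lemma~\ref{lem::Trace} applied to $h$ bounds the right-hand side by
\[C_{tr}(\Omega)\Bigl(\int_\Omega|f-\overline f|^2\,dA+\int_\Omega|\nabla f|^2\,dA\Bigr).\]
Lemma~\ref{lem::P leq} bounds the first term inside the bracket by $C_P(\Omega)\int_\Omega|\nabla f|^2\,dA$. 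Combining the two estimates gives the claim with
\[C_T(\Omega)=C_{tr}(\Omega)\bigl(1+C_P(\Omega)\bigr).\]
This constant depends only on $\Omega$, and in particular not on $\Gamma$ or $f$.

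There is no real obstacle here. The one point to check is that the uniformity in $\Gamma$ comes for free: we dominate the integral over $\Gamma$ by the integral over all of $\partial\Omega$ before applying any estimate.
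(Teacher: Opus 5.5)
Your proof is correct and is exactly the argument the paper has in mind: it gives no written proof and says only that the lemma ``follows immediately from the trace theorem and the Poincar\'e inequality.'' Applying Lemma~\ref{lem::Trace} to $f-\overline f$, after dominating the integral over $\Gamma$ by the one over $\partial\Omega$, and then Lemma~\ref{lem::P leq}, yields $C_T(\Omega)=C_{tr}(\Omega)\bigl(1+C_P(\Omega)\bigr)$, uniformly in $\Gamma$ as required.
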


\subsection{The measured Riemannian surface}

We briefly recall some basic notions concerning Riemann surfaces and measures. For further details, we refer the reader to \cite[Chapters 1 and 13]{Lee13}. A \textit{smooth surface} is a topological surface $X$ equipped with a smooth structure, namely, a maximal atlas of charts whose transition maps are smooth. A \textit{Riemannian surface} is a pair $(M, \mathfrak{g})$, where $M$ is a smooth surface and $\mathfrak{g}$ is a Riemannian metric on $M$, that is, smoothly varying inner product $\mathfrak{g}_p$ on the tangent spaces $T_pM$.

Let $M$ be a compact smooth surface equipped with its Borel $\sigma$-algebra. A locally finite Borel measure $\mu$ on $M$ is called a \textit{Radon measure} if it is inner and outer regular. Since $M$ itself is compact, every Radon measure on $M$ is finite.

Let $\mu$ and $\eta$ be two measures on $M$. We say that $\mu$ is \textit{absolutely continuous with respect to $\eta$}, denoted by $\mu\ll\eta$, if, for every Borel set $E\subseteq M$, $\eta(E)=0$ implies $\mu(E)=0$. For a nonnegative measurable function $\rho$, the notation $d\mu=\rho\,d\eta$ means that
\[\mu(E)=\int_E\rho\,d\eta\]
for every Borel set $E\subseteq M$. Such an identity immediately implies $\mu\ll\eta$.

Let $(M,\mathfrak{g})$ be a closed smooth Riemannian surface. Let $dA_{\mathfrak{g}}$ denote the Riemannian area measure induced by $\mathfrak{g}$ on $M$. Let $\nu$ be a finite nonzero Radon measure satisfying $\nu\ll dA_{\mathfrak{g}}$.
For $f\in C^\infty(M)$, define the Rayleigh quotient by
\[R_{M,{\mathfrak{g}},\nu}(f)=\frac{\int_M|\nabla_{\mathfrak{g}}f|_{\mathfrak{g}}^2\,dA_{\mathfrak{g}}}{\int_M f^2\,d\nu},\]
where $R_{M,{\mathfrak{g}},\nu}(f)=+\infty$ if $\int_M f^2\,d\nu=0$. The eigenvalues of the measured Riemannian surface $(M,\mathfrak{g},\nu)$ are defined by
\begin{equation}\label{eq::minmax R on surface}
	\lambda_k(M,\mathfrak{g},\nu)=\inf_{\substack{F\subset C^\infty(M)\\ \dim F=k}}\sup_{f\in F\setminus\{0\}} R_{M_{\mathfrak{g}}}(f,\nu),~~ k\geq1.
\end{equation}

The following result is the measured-metric version of the conformal eigenvalue estimate of Hassannezhad~\cite{Has11}; see also
Amini and Cohen-Steiner~\cite[p. 10, Eq. (5)]{AC18}, where it is explicitly observed that the proof of~\cite{Has11} extends to this setting.


\begin{theorem}\label{thm::eigenvalue of surface}
There exists a universal constant $A>0$ such that, for every closed smooth orientable Riemannian surface $(M,\mathfrak{g})$ of genus $g$, every finite nonzero Radon measure $\nu\ll dA_{\mathfrak{g}}$, and every $k\geq 1$,
    \begin{equation*}\label{eq::eigenvalue of surface}
\lambda_k(M,\mathfrak{g},\nu)\,\nu(M)\leq A(g+k).
    \end{equation*}
\end{theorem}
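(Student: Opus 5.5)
The plan is to reproduce Hassannezhad's argument~\cite{Has11} in the setting of a general Radon measure $\nu\ll dA_{\mathfrak g}$, following the remark in \cite[Eq.~(5)]{AC18}. The basic principle is that, in dimension two, the Dirichlet energy $\int_M|\nabla_{\mathfrak g}f|^2\,dA_{\mathfrak g}$ depends only on the conformal class of $\mathfrak g$. So it suffices to produce $k$ test functions $f_1,\dots,f_k\in C^\infty(M)$ with disjoint supports such that
\[\int_M f_i^2\,d\nu\ \ge\ c\,\frac{\nu(M)}{k}\quad\text{and}\quad\int_M|\nabla f_i|^2\,dA\ \le\ C\Bigl(1+\frac{g}{k}\Bigr).\]
Plugging their span into \eqref{eq::minmax R on surface} then gives $\lambda_k\,\nu(M)\le A(g+k)$. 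Since $\nu\ll dA_{\mathfrak g}$, the measure $\nu$ has no atoms, and this is exactly what the decomposition lemmas below require. The construction of the $f_i$ is done in a conformally related metric chosen for convenience.

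The first step is a Grigor'yan--Netrusov--Yau type decomposition. In a metric space with a uniform local covering property and a non-atomic finite measure, one can find many annuli $A_i=A(a_i,r_i,R_i)$ whose doubled versions $2A_i$ are pairwise disjoint and which each carry $\nu$-mass at least $c\,\nu(M)/k$. Each test function $f_i$ is a cutoff equal to $1$ on $A_i$ and supported in $2A_i$. I would run this decomposition at two scales, as Hassannezhad does, by combining the method of Colbois--Maerten with the GNY lemma.

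\emph{Small scale.} Use the metric $d_{\mathfrak h}$ of the hyperbolic (or flat) metric $\mathfrak h$ conformal to $\mathfrak g$. Below a fixed scale, balls in $\mathfrak h$ are uniformly bi-Lipschitz to Euclidean discs. The capacity of an annulus $A(a,r,2r)$ is therefore bounded by a universal constant, and logarithmic cutoffs give energy $O(1)$.

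\emph{Large scale.} Here the covering constant of $(M,d_{\mathfrak h})$ is not uniform. Instead, I would decompose $M$ into pieces of size comparable to the injectivity scale, for instance via a thick--thin decomposition or a covering by $O(g)$ sets of bounded geometry (the hyperbolic area is $4\pi(g-1)$). If the mass $\nu(M)$ is concentrated on a few large pieces, the small-scale annuli suffice. Otherwise one obtains roughly $k$ disjoint unions of pieces, each with mass about $\nu(M)/k$. Separating these unions costs energy proportional to their boundary complexity, and the total over all pieces is $O(g+k)$ by counting.

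The final step is bookkeeping. Using both families, we get $k$ disjointly supported functions whose energies sum to $O(g+k)$, each with $\nu$-mass at least $c\,\nu(M)/k$. Discarding the $k/2$ functions with the largest energy, and applying the construction with $2k$ in place of $k$, leaves $k$ functions each of energy $O((g+k)/k)$. Their Rayleigh quotients are then at most $C(g+k)/\nu(M)$.

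I expect the main obstacle to be the large-scale step, namely obtaining the additive bound $g+k$ rather than a multiplicative one such as $gk$. A naive approach would use a branched conformal cover of the sphere of degree $O(g)$ and pull back round-metric annuli. That bounds every capacity by $O(g)$ and only yields $A\,gk$. The fix is to charge the genus cost only once, through the total-energy summation and discarding argument above. This is where I would follow Hassannezhad's construction most closely, checking only that nothing in it uses more about $\nu$ than finiteness and non-atomicity.
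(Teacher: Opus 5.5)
Your overall route matches the paper's. The paper gives no argument of its own for this theorem: it invokes Hassannezhad \cite{Has11}, via the remark in \cite{AC18} that the measure enters only through the two-scale decomposition of the metric measure space $(M,d,\nu)$, which needs $\nu$ finite and non-atomic. You correctly note that non-atomicity is automatic from $\nu\ll dA_{\mathfrak g}$, and that conformal invariance of the energy lets you build test functions in a convenient metric. However, two steps of your reconstruction are wrong as written.

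\textbf{Small scale.} It is false that balls of $\mathfrak h$ below a fixed radius are uniformly bi-Lipschitz to Euclidean discs. Hyperbolic surfaces of genus $g$ can have closed geodesics of arbitrarily small length $\ell$. A ball of radius $1/2$ centred on such a geodesic contains the whole geodesic, a non-contractible loop, so it is not even a disc. What you need, and what holds uniformly, is Bishop--Gromov comparison for curvature $-1$, which is independent of the injectivity radius. It gives $\mathrm{Area}_{\mathfrak h}(B(x,r))\le 2\pi(\cosh r-1)$. It also shows that every ball of radius $4r$ with $r\le 1$ is covered by a universal number of balls of radius $r$. This supplies the covering property GNY needs below scale $1$. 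It also bounds by a universal constant the energy of the piecewise-linear distance cutoff that equals $1$ on $A(a,r,R)$ and is supported in $2A$, for $R\le 1$.

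\textbf{Large scale.} No thick--thin decomposition or ``boundary complexity'' count is involved. As stated, that step is not an argument, since the thin collars are not of bounded geometry. Apply the decomposition with $2k$ in place of $k$. It returns \emph{either} $2k$ annuli of outer radius at most $r_0$ with pairwise disjoint doubles and $\nu$-mass at least $c\,\nu(M)/k$ (GNY), \emph{or} $2k$ sets $D_i$ with $\nu(D_i)\ge c\,\nu(M)/k$ whose $r_0$-neighbourhoods $D_i^{r_0}$ are pairwise disjoint (Colbois--Maerten). You never need to mix the two families. In the second case, take $f_i=\max\{0,1-d_{\mathfrak h}(\cdot,D_i)/r_0\}$; its energy is at most $\mathrm{Area}_{\mathfrak h}(D_i^{r_0})/r_0^2$. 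By disjointness these energies sum to at most $4\pi(g-1)/r_0^2$, so the $k$ smallest are each $O(g/k)$. This is exactly where the genus is paid once and additively: Rayleigh quotients are $O(g/\nu(M))$ here, against $O(k/\nu(M))$ in the annulus case. For $g=0,1$, use the round or flat metric instead; there the covering property holds at all scales and GNY alone gives $Ck/\nu(M)$. Finally, when you smooth the Lipschitz cutoffs, approximate uniformly (e.g.\ by mollification) rather than merely in $H^1$. The density of $\nu$ may be unbounded, and only uniform convergence controls $\int f^2\,d\nu$.
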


\subsection{Oriented Euclidean polyhedral surfaces}

We recall the standard Riemann surface structure on an arbitrary oriented Euclidean polyhedral surface; see \cite[Section~1.1]{Bob11}.

Let $X$ be a connected topological surface. A \textit{complex coordinate
chart} is a pair $(U,z)$, where $U\subseteq X$ is open and
$z:U\to z(U)\subseteq\mathbb C$ is a homeomorphism onto an open set.
A collection $\mathcal A$ of pairwise compatible complex coordinate
charts covering $X$ is called a \textit{complex atlas}, where two
charts $(U_i,z_i)$ and $(U_j,z_j)$ are \textit{compatible} if the transition map
\[z_j\circ z_i^{-1}:z_i(U_i\cap U_j)\longrightarrow z_j(U_i\cap U_j)\]
is holomorphic. Two complex atlases $\mathcal{A}$ and $\mathcal{B}$ are \textit{compatible} if $\mathcal{A}\cup\mathcal{B}$ is a complex atlas. This compatibility relation is an equivalence relation on the set of complex atlases.
A \textit{complex structure} on $X$ is a maximal
complex atlas. A \textit{Riemann surface} is a connected topological
surface equipped with a complex structure. Note that every Riemann surface is naturally a smooth surface, since holomorphic transition maps are smooth.

An \textit{oriented Euclidean polyhedral surface} is a compact surface obtained from finitely many disjoint Euclidean triangles by gluing their edges pairwise via Euclidean isometries, such that:
    \begin{enumerate}[{\rm (i)}]
    	\item Two distinct triangles meet only in a common vertex or a common edge;
        \item Every edge belongs to exactly two triangles;
        \item The triangles incident with any vertex occur in one cyclic sequence,
        with consecutive triangles sharing an edge through that vertex;
       \item The triangles can be oriented so that every edge gluing reverses the induced boundary orientations.
    \end{enumerate}

There are three local types of points on an oriented Euclidean polyhedral
surface:
\begin{enumerate}[{\rm (1)}]
    \item an interior point of a triangle;
    \item an interior point of a glued edge;
    \item a polyhedral vertex.
\end{enumerate}

\begin{figure}[htbp]
\centering
\begin{tikzpicture}[line width=.55pt,scale=.92]
\begin{scope}[xshift=0cm]
\fill[gray!35] (0,-0.0) circle (0.30);
\draw (0,-0.0) circle (0.30);
    \draw (-1.15,-.65)--(0,1.15)--(1.15,-.65)--cycle;
    \fill (0,0) circle (.045);
\end{scope}

\begin{scope}[xshift=4.25cm]

    \draw (-1.15,-.62)--(-.52,.95)--(.95,.65)--(.8,-.9)--cycle;
    \draw (-1.15,-.62)--(.95,.65);
    \fill[gray!35] (-0.02,0.02) circle (0.30);
    \draw (-0.02,0.02) circle (0.30);
    \fill (-.02,.03) circle (.045);

\end{scope}

\begin{scope}[xshift=8.5cm]
    \coordinate (O) at (0,0);
    \foreach \a/\b in {0/65,65/130,130/180,180/230,230/295,295/360}{
        \draw (O)--({1.12*cos(\a)},{1.12*sin(\a)});
        \draw ({1.12*cos(\a)},{1.12*sin(\a)})
            --({1.12*cos(\b)},{1.12*sin(\b)});
    }
    \fill[gray!35] (O) circle (0.30);
    \draw (O) circle (0.30);

    \fill (O) circle (.065);
    \node at (322.5:0.7) {$\theta_1$};
    \node at (23.5:0.7) {$\theta_2$};
    \node at (100:0.7) {$\cdots$};
    \node at (277.5:0.7) {$\theta_m$};
\end{scope}
\end{tikzpicture}
\caption{Three types of points on an oriented Euclidean polyhedral surface}
\end{figure}

The induced intrinsic metric is locally Euclidean away from the finitely many polyhedral vertices. At a polyhedral vertex $p$, let $\theta_1, \theta_2, \ldots, \theta_m$ denote the angles at $p$ of the triangles incident to $p$. The sum
\[\Theta_p:=\theta_1+\theta_2+\ldots+\theta_m,\]
is called the \textit{total angle} at $p$.
If $\Theta_p\neq2\pi$, then $p$ is called a \textit{cone point}.

The following theorem shows that every oriented Euclidean polyhedral
surface carries a natural complex structure. We use the following slightly more precise version of
\cite[Theorem~3]{Bob11}. For completeness, we include a proof.

\begin{theorem}\label{thm::poly Riemann}
	Let $(X,h)$ be an oriented Euclidean polyhedral surface, and let $\Sigma$ be its finite set of cone points. Then $X$ admits a complex atlas $\{(U_i,z_i)\}_{i\in I}$ such that
    \begin{equation}\label{eq::poly Riemann}
        h|_{U_i\setminus\Sigma}=\eta_i |dz_i|^2,
    \end{equation}
    where $\eta_i$ is a smooth positive function on $z_i(U_i\setminus\Sigma)$ for every $i\in I$.
\end{theorem}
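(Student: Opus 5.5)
We construct the atlas explicitly from the three local types of points, and then check that transition maps are holomorphic and that the flat metric has the conformal form \eqref{eq::poly Riemann} in each chart.

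\textbf{Interior points of triangles.} Take $U$ to be the open interior of a triangle and $z$ an orientation-preserving isometric embedding into $\mathbb C$. Then $h=|dz|^2$, so $\eta\equiv1$.

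\textbf{Interior points of glued edges.} Take $U$ to be the union of the open interiors of the two adjacent triangles together with the open common edge. Develop the two triangles into the plane by orientation-preserving isometries that agree along the edge. This is possible precisely because of condition (iv): the gluing reverses boundary orientations, so the two triangles land on opposite sides of the edge. Again $h=|dz|^2$.

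\textbf{Polyhedral vertices.} Let $p$ have total angle $\Theta=\Theta_p$, and take $U$ to be the open star of $p$, shrunk to a small metric disc of radius $r$ about $p$. By condition (iii), the incident triangles form one cyclic sequence. Laying their angles consecutively defines a polar coordinate $(\rho,\phi)$ on $U$, with $\rho$ the distance to $p$ and $\phi\in\mathbb R/\Theta\mathbb Z$, and the flat metric is $h=d\rho^2+\rho^2d\phi^2$. Set
\[
z=\rho^{2\pi/\Theta}e^{2\pi i\phi/\Theta},
\]
which is a homeomorphism of $U$ onto a disc in $\mathbb C$. Write $w=\rho e^{i\phi}$ for the local (multivalued) developing coordinate, so that $z=w^{2\pi/\Theta}$. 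Then on $U\setminus\{p\}$,
\[
|dw|^2=\Bigl(\tfrac{\Theta}{2\pi}\Bigr)^2|z|^{2(\Theta/2\pi-1)}|dz|^2,
\]
so $\eta=(\Theta/2\pi)^2|z|^{2\Theta/2\pi-2}$, which is smooth and positive on $z(U\setminus\{p\})$. When $\Theta=2\pi$ we get $\eta\equiv1$ and $p\notin\Sigma$, consistent with the statement. (If $p\notin\Sigma$ one could alternatively use the developing chart directly.)

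\textbf{Compatibility.} On any overlap away from vertices, two charts of the first two types differ by an orientation-preserving Euclidean isometry $z\mapsto az+b$ with $|a|=1$, which is holomorphic. This uses orientability, i.e.\ that all developments are chosen orientation-preserving, consistently with (iv).

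An overlap between a vertex chart $z$ and a flat chart $w'$ lies away from $p$ (shrink the vertex discs so they are disjoint and contain no other vertices). There, locally $w'=aw+b$ with $|a|=1$, for a single-valued branch of $w$, and $w=z^{\Theta/2\pi}$ for a holomorphic branch of the power, since $z\neq0$ there. Hence the transition map is a composition of holomorphic maps. Two vertex charts do not overlap.

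\textbf{Main obstacle.} The only genuinely delicate step is the vertex chart: one must verify that $z$ is a well-defined homeomorphism despite the angle $\Theta$ not equaling $2\pi$, and that branches are chosen coherently on overlaps. We handle this by working on simply connected sectors of the punctured disc, where $w$ is single-valued. Finally, the charts cover $X$ by compactness, and the conformal factors in the three cases ($\eta\equiv1$ in the flat charts and the explicit power of $|z|$ at vertices) give \eqref{eq::poly Riemann}.
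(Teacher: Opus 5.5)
Your proposal is correct and follows essentially the same route as the paper's proof (Bobenko's construction). Both use orientation-preserving developing charts at interior points of triangles and edges, cone charts $z=\zeta^{2\pi/\Theta_p}$ at polyhedral vertices, and holomorphic transitions obtained from local branches of the power on simply connected pieces of the punctured disc. Your explicit factor $\eta=(\Theta/2\pi)^2|z|^{2\Theta/2\pi-2}$ is exactly the paper's $|\zeta'(z_i)|^2$ with $\zeta=z^{\Theta/2\pi}$.
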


\begin{proof}
	We use the standard construction from \cite[Section 1.1.3]{Bob11}. At an interior point of a triangle or of a glued edge, complex coordinates are obtained from oriented Euclidean isometries.
	
	Let $p$ be a polyhedral vertex and $F_1,\ldots,F_m$ be the sequence of successive triangles incident to $p$. Let $\theta_1, \theta_2, \ldots, \theta_m$ be the angles at $p$ corresponding to $F_1,\ldots,F_m$. Then the total angle $\Theta_p=\theta_1+\theta_2+\ldots+\theta_m$. Set $\gamma_p=\frac{2\pi}{\Theta_p}$. Choose a sufficiently small $\varepsilon$-neighborhood $U_p$ of $p$, then $F_j\cap U_p$ is isometric to the Euclidean sector
	\begin{equation*}
		S_{j}(p)=\left\{\zeta=re^{i\varphi}:0\leq r<\varepsilon,\;\theta_1+\ldots+\theta_{j-1}\leq\varphi\leq\theta_1+\ldots+\theta_{j}\right\}.
	\end{equation*}
	For $re^{i\varphi}$ in each $S_{j}(p)$,  we define
	\begin{equation}\label{eq::z_p define}
		z_p=r^{\gamma_p}e^{i\gamma_p\varphi}.
	\end{equation}
	Since $\gamma_p\Theta_p=2\pi$, the boundary ray $\varphi=0$ of $S_{1}(p)$ and the boundary ray $\varphi=\Theta_p$ of $S_{m}(p)$ have the same image under this map. Therefore \eqref{eq::z_p define} induces a well-defined homeomorphism
	\[z_p:U_p\longrightarrow D_{\varepsilon^{\gamma_p}},~z_p(p)=0,\]
	where $D_{\varepsilon^{\gamma_p}}$ denotes the disk of radius $\varepsilon^{\gamma_p}$ in $\mathbb{C}$.
	
	On any simply connected open subset of $U_p$ not containing $p$, choose a holomorphic branch $L$ of $\mathrm{Log}~\zeta$. With respect to this branch, the cone coordinate can be written as $z_p=e^{\gamma_p L(\zeta)}=\zeta^{\gamma_p}$.
	Conversely, on any simply connected open subset of $D_{\varepsilon^{\gamma_p}}\setminus\{0\}$, the inverse relation is
	\begin{equation}\label{eq::inverse}
		\zeta=z_p^{1/\gamma_p},
	\end{equation}
	where the holomorphic branch of $z_p^{1/\gamma_p}$ is chosen so as to recover the corresponding part of the sector. Thus, the transition maps between the cone charts and the ordinary Euclidean charts are holomorphic. The transition maps between ordinary Euclidean charts are Euclidean isometries and hence are also holomorphic. Therefore, together with the ordinary Euclidean charts away from the cone points, the cone charts form a complex atlas $\{(U_i,z_i)\}_{i\in I}$ on $X$.
	
	It remains to express the metric $h$ in these complex coordinates.
	In a local developed Euclidean coordinate $\zeta$, we have $h=|d\zeta|^2$. Near a cone point $p$, this is exactly the inverse relation $\zeta=z^{1/\gamma_p}$ from \eqref{eq::inverse}. Otherwise, it is $\zeta=az+b$ for some $a,b\in \mathbb{C}$ as a Euclidean isometry. Hence
	\[ h=|\zeta'(z_i)|^2|dz_i|^2,\]
	where $\zeta'$ is smooth and nonzero in $z_i(U_i\setminus \Sigma)$ for any $(U_i,z_i)$, as desired.
\end{proof}

\section{Conformal metrics on Euclidean polyhedral surfaces}\label{sec:3}

In this section, we construct a smooth conformal Riemannian metric
on an oriented Euclidean polyhedral surface.

\begin{definition}[Conformal Riemannian metric]
	{\rm Let $X$ be a Riemann surface with a complex atlas $\{(U_i, z_i)\}_{i\in I}$. A \textit{conformal Riemannian metric} on $X$ is a Riemannian metric $\mathfrak{g}$ on $X$ such that, in every holomorphic coordinate $z_i$, \[\mathfrak{g}=\lambda_i(z_i)|dz_i|^2,\]
     where $\lambda_i$ is a smooth positive function on $z_i(U_i)$.}
\end{definition}

The condition that $\mathfrak{g}$ is conformal to the complex structure is independent of the choice of complex coordinates. Indeed, on an overlap $U_i\cap U_j$, we have
$z_j=F_{ji}(z_i)$, where $F_{ji}$ is biholomorphic. Therefore,
\[|dz_j|^2=|F_{ji}'(z_i)|^2|dz_i|^2,\]
where $|F_{ji}'(z_i)|^2$ is smooth and strictly positive. Thus
\[\mathfrak{g}=\frac{\lambda_i(z_i)}{|F_{ji}'(z_i)|^2}|dz_j|^2,\]
so the conformal factor remains smooth and strictly positive.

Although the polyhedral metric $h$ is not a smooth Riemannian metric at the cone points, we can replace $h$ by a smooth Riemannian metric conformal to the natural complex structure on $X$. The following standard result guarantees the existence of such a metric.

\begin{theorem}[\rm{\cite[p. 10]{Bob11}}]\label{thm::smooth conformal}
	Every compact Riemann surface admits a smooth conformal Riemannian metric.
\end{theorem}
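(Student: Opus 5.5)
The plan is a partition-of-unity argument, which is the standard way to build a global smooth metric conformal to a given complex structure. Fix a complex atlas $\{(U_i,z_i)\}_{i\in I}$ of the compact Riemann surface $X$. Since $X$ is compact, we may pass to a finite subatlas. After shrinking each chart, we may also assume that each $z_i(U_i)$ is an open subset of $\mathbb{C}$ and that the $U_i$ still cover $X$.

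Because $X$ is a smooth surface, there is a smooth partition of unity $\{\psi_i\}$ subordinate to $\{U_i\}$. Concretely, $\psi_i\in C^\infty(X)$, $\psi_i\ge 0$, $\operatorname{supp}\psi_i\subset U_i$ is compact, and $\sum_i\psi_i\equiv1$. On each $U_i$ the expression $|dz_i|^2$, which is the pullback of the Euclidean metric by $z_i$, is a smooth Riemannian metric. The extended tensor $\psi_i|dz_i|^2$, taken to be zero outside $U_i$, is then a smooth symmetric $2$-tensor on $X$ that is positive semidefinite everywhere. We define
\[\mathfrak{g}:=\sum_i \psi_i\,|dz_i|^2 .\]

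The next step is to check that $\mathfrak{g}$ is positive definite and conformal. Fix a chart $z_j$ and a point $p\in U_j$. On $U_i\cap U_j$ we have $|dz_i|^2=|F_{ij}'(z_j)|^2|dz_j|^2$, where $F_{ij}=z_i\circ z_j^{-1}$ is biholomorphic. This is the computation displayed just before the statement, and it gives $|F_{ij}'|>0$. Hence near $p$,
\[\mathfrak{g}=\Bigl(\sum_i \psi_i\,|F_{ij}'(z_j)|^2\Bigr)|dz_j|^2=:\lambda_j(z_j)|dz_j|^2 .\]
Each summand is smooth on $U_j$, since $\psi_i$ vanishes near $\partial U_i$ and so no singularity arises at the edge of the overlap. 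This gives smoothness of $\lambda_j$. Positivity comes from the partition of unity: at $p$ some $\psi_i(p)>0$, and for that index $|F_{ij}'|^2>0$, so $\lambda_j(p)>0$.

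The only delicate point is that smoothness claim. To make it rigorous, one writes $\psi_i|F_{ij}'|^2$ on $U_j$ as a function that equals zero outside $\operatorname{supp}\psi_i\cap U_j$, which is closed in $U_j$. Because $\operatorname{supp}\psi_i$ is a compact subset of $U_i$, this product is smooth on all of $U_j$. With this, $\mathfrak{g}$ is a smooth Riemannian metric, and by the definition above it is conformal to the complex structure. This completes the argument.
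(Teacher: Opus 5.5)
Your proof is correct. The extension-by-zero argument gives smoothness of each $\psi_i|F_{ij}'|^2$ on $U_j$, positivity holds because some $\psi_i(p)>0$, and the computation works for any holomorphic chart $z_j$, as the paper's definition of a conformal metric requires. The paper gives no proof of its own here and only cites Bobenko's notes; your argument, gluing the local flat metrics $|dz_i|^2$ with a partition of unity, is the standard proof of that cited fact.
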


Combining Theorem~\ref{thm::poly Riemann} with Theorem~\ref{thm::smooth conformal}, we obtain the following conformal replacement of the polyhedral metric.

\begin{lemma}\label{lem::surface replace}
	Let $(X,h)$ be an oriented Euclidean polyhedral surface, let $\Sigma\subset X$ be its finite set of cone points, and equip $X$ with the natural complex structure provided by Theorem~\ref{thm::poly Riemann}. Then there exist a smooth conformal Riemannian metric $\mathfrak{g}$ on $X$ and a positive smooth function
    \[\rho:X\setminus\Sigma\longrightarrow(0,\infty)\]
    such that
    \[h=\rho\mathfrak{g}~~\text{on }X\setminus\Sigma.\]
    Extending $\rho$ arbitrarily to $\Sigma$, one has
    \[dA_h=\rho\,dA_{\mathfrak{g}}\]
    as finite Radon measures on $X$. Moreover, for every $f\in C^\infty(X)$,
    \begin{equation}\label{eq::energy}
    	\int_X |\nabla_h f|_h^2\,dA_h=\int_X |\nabla_{\mathfrak{g}}f|_{\mathfrak{g}}^2\,dA_{\mathfrak{g}}.
    \end{equation}
\end{lemma}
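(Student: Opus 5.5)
Take $\mathfrak{g}$ to be the smooth conformal Riemannian metric on the compact Riemann surface $X$ given by Theorem~\ref{thm::smooth conformal}; here $X$ carries the complex structure of Theorem~\ref{thm::poly Riemann}. Then compare $h$ and $\mathfrak{g}$ chart by chart. In a chart $(U_i,z_i)$ from Theorem~\ref{thm::poly Riemann}, we have $h=\eta_i|dz_i|^2$ on $U_i\setminus\Sigma$. Since $\mathfrak{g}$ is conformal, we also have $\mathfrak{g}=\lambda_i|dz_i|^2$ with $\lambda_i$ smooth and positive. Define $\rho=\eta_i/\lambda_i$ on $U_i\setminus\Sigma$. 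This is smooth and positive. It is independent of the chart: on an overlap $U_i\cap U_j$, both $\eta$ and $\lambda$ pick up the same factor $|F'_{ji}|^2$, as in the computation following the definition of conformal metrics. Hence $\rho$ is a well-defined positive smooth function on $X\setminus\Sigma$ with $h=\rho\mathfrak{g}$ there. (If the atlas from Theorem~\ref{thm::smooth conformal} is a different complex atlas, it is compatible with the given one, so we may use the charts of Theorem~\ref{thm::poly Riemann} throughout.)

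For the measures, work in a chart with $z_i=x+iy$. On $U_i\setminus\Sigma$ we have $dA_h=\eta_i\,dx\,dy$ and $dA_{\mathfrak{g}}=\lambda_i\,dx\,dy$, so $dA_h=\rho\,dA_{\mathfrak{g}}$ there. The set $\Sigma$ is finite. It has $dA_h$-measure zero because points have zero area in the polyhedral metric, and it has $dA_{\mathfrak{g}}$-measure zero because $\mathfrak{g}$ is smooth. So the identity $dA_h(E)=\int_E\rho\,dA_{\mathfrak{g}}$ holds for every Borel set $E$, whatever value $\rho$ takes on $\Sigma$. Finiteness is clear: $dA_h(X)$ is the total area of finitely many triangles, and $dA_{\mathfrak{g}}(X)<\infty$ by compactness. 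Both measures are Radon because they are finite Borel measures on a compact metrizable space. It follows that $\rho\in L^1(dA_{\mathfrak{g}})$, even though $\rho$ may blow up or degenerate near cone points; for example, near $p$ the factor $\eta$ behaves like $|z|^{2/\gamma_p-2}$, which is integrable.

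The energy identity comes from conformal invariance of the Dirichlet energy in two dimensions. On $X\setminus\Sigma$, because $h=\rho\mathfrak{g}$, we have $|\nabla_h f|_h^2=\rho^{-1}|\nabla_{\mathfrak{g}}f|_{\mathfrak{g}}^2$, and $dA_h=\rho\,dA_{\mathfrak{g}}$. Therefore $|\nabla_h f|_h^2\,dA_h=|\nabla_{\mathfrak{g}}f|_{\mathfrak{g}}^2\,dA_{\mathfrak{g}}$ pointwise on $X\setminus\Sigma$. Concretely, in the chart both sides equal $(f_x^2+f_y^2)\,dx\,dy$. Integrate over $X\setminus\Sigma$ and add the null set $\Sigma$ on both sides.

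The main point to watch is what the left side of \eqref{eq::energy} means at the cone points, where $h$ is singular. I would interpret it as the integral over $X\setminus\Sigma$, which is the natural definition. With that reading it is finite: the integrand $|\nabla_{\mathfrak{g}}f|^2_{\mathfrak{g}}$ is bounded because $f\in C^\infty(X)$ for the smooth structure of $X$, and $dA_{\mathfrak{g}}$ is finite. No further estimates are needed.
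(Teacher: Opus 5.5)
Your proposal is correct and follows essentially the same route as the paper: choose any smooth conformal metric $\mathfrak{g}$, set $\rho=\eta_i/\lambda_i$ chart by chart, note that the finite set $\Sigma$ is null for both area measures, and get the energy identity by integrating the pointwise relation $|\nabla_h f|_h^2\,dA_h=|\nabla_{\mathfrak{g}}f|_{\mathfrak{g}}^2\,dA_{\mathfrak{g}}$ over $X\setminus\Sigma$. The differences are only cosmetic. The paper checks that $\rho$ is well defined by pointwise uniqueness in $h=\rho\mathfrak{g}$ rather than via the factor $|F_{ji}'|^2$. It also obtains the Radon property from the decomposition $dA_h(E)=\sum_i \mathcal{L}^2|_{T_i}(E\cap T_i)$ rather than from regularity of finite Borel measures on compact metric spaces.
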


\begin{proof}
	By Theorem \ref{thm::smooth conformal}, we can choose a smooth conformal Riemannian metric $\mathfrak{g}$ on $X$. Let $z:U\to z(U)\subset \mathbb{C}$ be a complex coordinate chart from Theorem \ref{thm::poly Riemann}. Then
    \[ h=\eta(z)|dz|^2,\]
    where $\eta$ is a smooth positive function on $z(U\setminus\Sigma)$. Since $\mathfrak{g}$ is conformal, in the same coordinate, we have $\mathfrak{g}=\lambda(z)|dz|^2$, where $\lambda$ is a smooth positive function on $z(U)$. Therefore
    \[h=\rho\mathfrak{g},~\rho=\frac{\eta(z)}{\lambda(z)}\]
    on $U\setminus\Sigma$. These local definitions agree on overlaps, since $\rho$ is uniquely determined by the pointwise relation $h=\rho\mathfrak{g}$. Thus $\rho$ is a well-defined smooth positive function on $X\setminus\Sigma$.

    Next, we prove the measure part of the lemma. Let $T_1,\ldots,T_N$ be all the Euclidean triangles of $(X,h)$, and for each $i$, let $\mathcal{L}^2|_{T_i}$ be the $2$-dimensional Lebesgue measure on $T_i$. For a Borel set $E\subset X$, let $E_i=E\cap T_i$. Then $E_i$ is a Borel subset of $T_i$ and
    \begin{equation}\label{eq::dA_h E}
    	dA_h(E)=\sum_{i=1}^N \mathcal{L}^2|_{T_i}(E_i).
    \end{equation}
    Since each $\mathcal L^2|_{T_j}$ is a finite Radon measure, $dA_h$ is also a finite Radon measure.

    Extend $\rho$ to $\Sigma$ by assigning arbitrary finite values. On $X\setminus\Sigma$, the relation $h=\rho \mathfrak{g}$ and the fact that $X$ is $2$-dimensional give \[dA_h=\rho\,dA_{\mathfrak{g}}.\]
    The finite set $\Sigma$ has zero area for both measures: this is standard for the smooth area measure, and it follows for $dA_h$ from \eqref{eq::dA_h E}. Consequently, for every Borel set $E\subseteq X$,
    \begin{equation*}
    	dA_h(E)=dA_h(E\setminus\Sigma)=\int_{E\setminus\Sigma}\rho\,dA_{\mathfrak{g}}
    	=\int_E\rho\,dA_{\mathfrak{g}}.
    \end{equation*}
    Thus $dA_h=\rho\,dA_{\mathfrak{g}}$ as Borel measures. Since $dA_h$ is a finite Radon measure, $\rho\,dA_{\mathfrak{g}}$ is the same finite Radon measure, and the identity immediately gives $dA_h\ll dA_{\mathfrak{g}}$.

    Finally, on $X\setminus\Sigma$, we have $h^{-1}=\rho^{-1}\mathfrak{g}^{-1}$ and hence
        \[|\nabla_hf|_h^2 =\rho^{-1}|\nabla_{\mathfrak{g}}f|_{\mathfrak{g}}^2.\]
    Multiplying by $dA_h = \rho\,dA_{\mathfrak{g}}$ and integrating it, we get \eqref{eq::energy}.
\end{proof}

\section{From graphs to surfaces}\label{sec:4}

In this section, we develop the geometric construction underlying
our transfer principle. We begin by constructing a Euclidean
polyhedral surface from a graph.

\subsection{Polyhedral thickenings of graphs}

A \textit{cellular embedding} of a connected graph $G$ in a closed connected orientable surface $S$ is an embedding  $G\hookrightarrow S$ such that every connected component of $S\setminus G$ is homeomorphic to an open disk. These components are called the \textit{faces} of the embedding.

\begin{lemma}[\rm{\cite[Proposition 3.4.1]{MT01}}]\label{lem::cellular}
    Every minimum  genus embedding of a connected  graph $G$ is cellular.
\end{lemma}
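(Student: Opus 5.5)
The plan is to argue by contradiction, using surgery along a curve inside a non-disk face. Fix an embedding of the connected graph $G$ in a closed orientable surface $S$ of genus $g$ equal to the genus of $G$. Suppose some face $F$, a connected component of $S\setminus G$, is not homeomorphic to an open disk. I will produce an embedding of $G$ in an orientable surface of genus strictly less than $g$, which contradicts minimality.

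\emph{Step 1: a non-trivial curve in $F$.} The face $F$ is a connected open subset of $S$, so it is a noncompact connected surface. It is noncompact because $G\neq\emptyset$, so $F\neq S$. I claim that $F$ contains a simple closed curve $\gamma$ that bounds no disk in $F$. Otherwise every simple closed curve in $F$ bounds a disk in $F$. Since $\pi_1(F)$ is generated by classes of simple closed curves (for instance via a handle decomposition of $F$), $F$ would then be simply connected. By the classification of simply connected surfaces, $F$ would be homeomorphic to the plane or to the sphere. The sphere is excluded because $F$ is noncompact, and the plane is exactly the open disk, which is a contradiction. This step relies on classical surface topology and is the step I expect to need the most care. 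The cleanest route is probably the classification of noncompact surfaces, or a compact exhaustion of $F$ by bordered subsurfaces: a subsurface that is not a disk either has positive genus, which gives a non-separating curve, or has at least two boundary components, which gives a boundary curve that separates off non-disk pieces.

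\emph{Step 2: surgery.} Thicken $\gamma$ to an annulus inside $F$. This annulus is disjoint from $G$. Cut $S$ along $\gamma$ and cap each of the two new boundary circles with a disk. The resulting closed surface is orientable and contains an embedded copy of $G$, because $G$ is untouched.

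\emph{Case (a): $\gamma$ is non-separating in $S$.} The capped surface is connected and has genus $g-1$. Hence $G$ embeds in an orientable surface of genus $g-1$, a contradiction.

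\emph{Case (b): $\gamma$ separates $S$ into two pieces.} Write the pieces as $S_1$ and $S_2$, with closures glued along $\gamma$, and let $\widehat S_1,\widehat S_2$ be their cappings. Since $G$ is connected and disjoint from $\gamma$, it lies in one side, say $S_1$. Then $S_2$ is connected, avoids $G$, and meets $F$, so $S_2\subseteq F$. If $\widehat S_2$ had genus $0$, then $\overline{S_2}$ would be a closed disk bounded by $\gamma$ and contained in $F$, contrary to the choice of $\gamma$. Therefore $\operatorname{genus}(\widehat S_2)\geq 1$. Additivity of genus under connected sum gives $g=\operatorname{genus}(\widehat S_1)+\operatorname{genus}(\widehat S_2)$, so $G$ embeds in $\widehat S_1$, which has genus at most $g-1$. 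This is again a contradiction.

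Consequently every face is an open disk, so the embedding is cellular. The trivial case in which $G$ is a single vertex is consistent with this: the sphere minus a point is a disk.
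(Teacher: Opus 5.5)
The paper does not prove this lemma; it cites Mohar--Thomassen, Proposition 3.4.1 (this is Youngs' theorem). Your argument is the standard surgery proof of that result and is correct: take a simple closed curve in a non-disk face that bounds no disk there, cut along it and cap, and in the separating case keep the side containing $G$. The only step that leans on heavy machinery is Step~1, which uses the classification of simply connected noncompact surfaces; you can make it elementary using the finiteness of $G$, since after a PL adjustment each face is the interior of a component of $S$ minus an open regular neighborhood of $G$, so a non-disk face visibly contains a non-separating curve (positive genus) or a curve parallel to one of at least two boundary components, and neither bounds a disk in the face.
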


For a graph $G$, the Euclidean polyhedral surface constructed as
follows is called the \textit{polyhedral thickening} of $G$.

\begin{lemma}[Polyhedral thickening]\label{lem::poly thickening}
    Let $G=(V,E)$ be a connected graph of genus $g$ and maximum degree $d_{\max}>0$. Set
    \[w:=\frac{1}{10d_{\max}}.\]
    Then there exist a closed oriented Euclidean polyhedral surface $(M,h)$ of genus $g$, unit Euclidean squares $Q_v\subset M$ for $v\in V$, Euclidean rectangles $R_e\subset M$ for $e\in E$, and intervals $A_{v,e}\subset\partial Q_v$ for every incident pair $v\in e$, such that:
    \begin{enumerate}[{\rm (1)}]
    	\item the interiors of the squares $Q_v$ are pairwise disjoint, and each $Q_v$ is isometric to the unit Euclidean square
        \[([0,1]\times[0,1],dx^2+dy^2);\]

        \item for each $v\in V$, the intervals $A_{v,e}$, with $e\ni v$, are pairwise disjoint, each has length $w$, and none contains a corner of $Q_v$;

        \item if $e=\{u,v\}$, then $R_e$ is isometric to the Euclidean rectangle
        \[([0,1]\times[0,w],dx^2+dy^2),\]
        and its two opposite short sides are identified isometrically with $A_{u,e}$ and $A_{v,e}$;

        \item the interiors of the rectangles $R_e$ are pairwise disjoint, and each $R_e$ meets the vertex squares precisely along its two short sides.
    \end{enumerate}
\end{lemma}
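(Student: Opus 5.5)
We build $M$ from a minimum genus embedding of $G$. By Lemma~\ref{lem::cellular}, fix a cellular embedding of $G$ in a closed orientable surface $S$ of genus $g$. This gives a rotation system: at each vertex $v$ a cyclic order $e_1,\dots,e_{d_v}$ of the incident edges. The faces correspond to the facial walks of this rotation system. We then assemble $M$ from three kinds of pieces: the vertex squares $Q_v$, the edge rectangles $R_e$, and one polygonal ``face patch'' for each face. The gluing will reproduce the combinatorics of the ribbon-graph neighbourhood of $G$ in $S$.

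\textbf{Step 1 (vertex squares and attachment intervals).} For each $v$, take a unit square $Q_v$. Traverse $\partial Q_v$ counterclockwise, which has perimeter $4$, and place $d_v$ disjoint intervals $A_{v,e_1},\dots,A_{v,e_{d_v}}$ of length $w=1/(10d_{\max})$ in the cyclic order of the rotation, avoiding the corners. For instance, place at most $\lceil d_v/4\rceil$ intervals on each side, with gaps of size at least $w$. The total length used is at most $2d_vw\le 1/5$, so there is ample room, and condition (2) holds.

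\textbf{Step 2 (edge rectangles).} For $e=\{u,v\}$, glue the short sides of a $1\times w$ rectangle $R_e$ isometrically to $A_{u,e}$ and $A_{v,e}$. The orientation of each gluing is chosen so that the result is orientable, as in the standard ribbon graph. The union $N=\bigcup Q_v\cup\bigcup R_e$ is then an oriented surface with boundary, homeomorphic to a regular neighbourhood of $G$ in $S$. Its boundary components are polygonal closed curves, and they correspond bijectively to the faces of the embedding. Each boundary curve consists of long sides of rectangles and of the arcs of $\partial Q_v$ between consecutive attachment intervals.

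\textbf{Step 3 (capping and triangulating).} Each boundary curve $\gamma$ of $N$ is a closed piecewise-linear curve with finitely many straight segments. We cap it by a flat polygonal disk $D_\gamma$, and this is the main obstacle: the capping must be Euclidean. Abstractly $\gamma$ need not bound a convex planar polygon, since the angles at corners seen from the outside of $N$ may be reflex or may sum incorrectly. However, we do not need $D_\gamma$ to be planar, only a union of flat triangles glued edge to edge.

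I would first try the cone construction over a subdivision. Subdivide the segments of $\gamma$ into pieces of length at most $w$, then cone them off to a new apex point $p_\gamma$ using isosceles triangles of height $1$. Each such triangle is a genuine Euclidean triangle whose base is the subdivided segment. Any resulting angle defects at $p_\gamma$ and at the corners of $\gamma$ simply become cone points, which the paper's framework allows by Theorem~\ref{thm::poly Riemann}. Next, triangulate each $Q_v$ and each $R_e$ compatibly with all the subdivision points, for example by coning each from an interior point. Then check conditions (i)--(iv): each edge lies in exactly two triangles, the links of vertices are cycles, and the orientation is consistent because $N$ is oriented and the cones inherit the boundary orientation.

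The resulting closed surface $M=N\cup\bigcup_\gamma D_\gamma$ is homeomorphic to $S$. This is because $N$ is a regular neighbourhood of $G$ and the faces are disks. Hence $M$ has genus $g$. Finally, the interiors of the $Q_v$ and of the $R_e$ are disjoint by construction, and each $R_e$ meets the squares only along its short sides, so (1), (3) and (4) hold.
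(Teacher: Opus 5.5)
Your strategy is the paper's: take a minimum-genus (hence cellular) embedding, build the ribbon surface $N$ from unit squares and $1\times w$ bands glued in the rotation order, cap each boundary curve (one per face) with a cone of flat isosceles triangles sharing a single apex, then triangulate the squares and bands compatibly. There is one step that fails as you wrote it, and it sits in the capping construction.

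You take the cone triangles to be isosceles \emph{of height $1$} over pieces of length at most $w$. These pieces generally have different lengths $\ell_i$, since $\gamma$ is made of long sides of length $1$ and gap arcs of arbitrary length, and you do not subdivide into equal pieces. Consecutive cone triangles $T_i$ and $T_{i+1}$ must be glued along the segment from their common boundary point $x_{i+1}$ to the apex $p_\gamma$. That segment has length $\sqrt{1+\ell_i^2/4}$ in $T_i$ but $\sqrt{1+\ell_{i+1}^2/4}$ in $T_{i+1}$. Unless $\ell_i=\ell_{i+1}$, the gluing is not an isometry, so the result is not a Euclidean polyhedral surface in the required sense (edges glued by Euclidean isometries). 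The cone points are not the problem; the mismatched interior edges are.

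The fix is to prescribe the leg length instead of the height. Use triangles with side lengths $R,R,\ell_i$ for one fixed $R>\frac12\max_i\ell_i$; for instance $R=1$ works, since every straight segment of $\gamma$ has length at most $1$. Then every apex edge has length $R$ and consecutive triangles glue isometrically. This is exactly the paper's choice, and with it your subdivision of $\gamma$ becomes unnecessary: the paper cones directly over the segments of $\gamma$ and only afterwards triangulates each $Q_v$ and $R_e$ using the endpoints of the $A_{v,e}$. With this correction, the rest of your argument goes through as in the paper: the placement of the intervals, orientability via orientation-reversing gluings, $M\cong S$ with genus $g$, and properties (1)--(4).
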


\begin{proof}
    Choose a minimum-genus embedding
    \[\Phi:G\hookrightarrow S\]
    into a closed orientable surface $S$ of genus $g$. We first construct a piecewise Euclidean surface by thickening $G$ according to the embedding $\Phi(G)$.

    \medskip\noindent{\bf Stage 1.}\
    For each vertex $v\in V$, take an oriented Euclidean unit square $Q_v$. For each edge $e\in E$, take an oriented Euclidean rectangle
    \[R_e=[0,1]\times[0,w].\]
    Assume that all these pieces are pairwise disjoint.
    \medskip

    Fix a vertex $v$. The embedding $\Phi$, together with the orientation of $S$, determines a cyclic order on the edges incident with $v$. Hence we can choose pairwise disjoint closed intervals
    \[A_{v,e}\subset\partial Q_v,\qquad e\ni v,\]
    in this cyclic order, each of length $w$ and disjoint from the four corners of $Q_v$. This is possible since
    \[\sum_{e\ni v}|A_{v,e}|=d_vw\leq d_{\max}w=\frac{1}{10},\]
    which is less than the side length of  $Q_v$.

    \medskip\noindent{\bf Stage 2.}\
     Let $e=\{u,v\}$. Identify the short side $\{0\}\times[0,w]\subset R_e$ and $\{1\}\times[0,w]\subset R_e$ isometrically with $A_{u,e}$ and $A_{v,e}$, respectively. Choose the gluing maps to reverse the boundary orientations, so that
     the quotient inherits a consistent orientation. We obtain the quotient space
    \[N(G)=\left(\coprod_{v\in V}Q_v\ \sqcup\ \coprod_{e\in E}R_e\right)\bigg/\!\sim.\]


    \begin{figure}[htbp]
    \centering
    \begin{tikzpicture}[line width=.55pt,scale=1.2]
		
		\draw (-6,-.55) rectangle (-4.7,.65);
		\node at (-5.35,.05) {$Q_u$};
		
		\draw[line width=1.8pt] (-4.7,-.05) -- (-4.7,.22);
		\node[above] at (-4.63,.55) {$A_{u,e}$};
		
		\draw (-4.08,-.04) rectangle (-1.95,.22);
		\node[above] at (-3.02,.22) {$R_e$};
		
		\draw[line width=1.8pt] (-1.35,-.05) -- (-1.35,.22);
		\node[above] at (-1.41,.55) {$A_{v,e}$};
		
		\draw (-1.35,-.55) rectangle (-.05,.65);
		\node at (-.7,.05) {$Q_v$};
		
		\draw[->,thick] (.45,.05) -- (1.15,.05);
		
		
		\draw (1.5,-.55) rectangle (2.85,.65);
		\node at (2.18,.05) {$Q_u$};
		
		\draw[thick] (2.85,-.05) rectangle (4.98,.22);
		\node[above] at (3.915,.22) {$R_e$};
		
		\draw (4.98,-.55) rectangle (6.33,.65);
		\node at (5.655,.05) {$Q_v$};
    \end{tikzpicture}
    \caption{The first two stages for one edge.}
    \label{fig:2}
    \end{figure}

    \noindent{\bf Stage 3.}
    Since the intervals around every $Q_v$ occur in the same cyclic order as the incident edges around $\Phi(v)$, the quotient $N(G)$ is homeomorphic to a regular neighborhood of $\Phi(G)$: the squares thicken the vertices and the rectangles thicken the edges. In particular, $N(G)$ is a compact orientable surface with boundary. By Lemma \ref{lem::cellular}, the embedding $\Phi$ is cellular, and hence the boundary components of $N(G)$ correspond precisely to the faces of the embedding $\Phi$.

    We now cap off each boundary component of $N(G)$ with a topological disk, thereby recovering a closed surface homeomorphic to $S$. Let $C$ be a boundary component of $N(G)$, which is a piecewise linear curve, and let its successive edge lengths be $\ell_1,\ldots,\ell_m$. Choose
    \[ R>\frac12\max_i\ell_i.\]
    For each $i$, take a Euclidean isosceles triangle with side lengths $R,R,\ell_i$. Glue the equal sides of these triangles cyclically, identifying all their apex vertices to a single interior vertex. The resulting abstract piecewise Euclidean disk has boundary isometric to $C$. Attach its boundary to $C$ by an isometry that reverses the induced boundary orientations. Repeating this operation for every component of $\partial N(G)$, we obtain a closed oriented surface $M$ with a piecewise Euclidean metric $h$. Since $M$ is homeomorphic to $S$, it has genus $g$.

    \begin{figure}[htbp]
    \centering
    \begin{tikzpicture}[line width=.6pt,scale=.95]
		\draw[dashed]
		(-3.7625,0.54992) -- (-2.8875,1.1199) -- (-1.95,0.6924) -- (-2.1375,-0.3765) -- (-3.2625,-0.6899) -- (-3.95,-0.0914) -- cycle;
		\node[align=center] at (-2.95,-1.445) {boundary component\\ $C\subset\partial N(G)$};
		
		\draw[->,thick] (-1.5,.18)--(-0.4,.18);
		\coordinate (O) at (1.72,.18);
		\coordinate (A) at (.72,.03);
		\coordinate (B) at (1.12,.92);
		\coordinate (C) at (2.05,1.18);
		\coordinate (D) at (2.72,.56);
		\coordinate (E) at (2.55,-.35);
		\coordinate (F) at (1.48,-.63);
		\draw[fill=gray!8] (A)--(B)--(C)--(D)--(E)--(F)--cycle;
		\foreach \P in {A,B,C,D,E,F}{\draw (O)--(\P);}
		\fill (O) circle (.04);
		\node[align=center] at (1.72,-1.445)
		{piecewise Euclidean disk\\glued along $C$};
	\end{tikzpicture}
	\caption{The third stage for a boundary component.}
	\label{fig:3}
    \end{figure}

    The construction above gives the required metric properties and the topology. To obtain a Euclidean polyhedral surface in the precise sense of our definition, we subdivide each $Q_v$ into Euclidean triangles by taking all endpoints of the intervals $A_{v,e}$ as boundary vertices, and triangulate each $R_e$ compatibly with these subdivisions. This does not change the underlying metric or topology. Hence the resulting $(M,h)$ is a closed oriented Euclidean polyhedral surface satisfying all the stated properties. This completes the proof.
\end{proof}

\subsection{Local estimates}

In the remainder of this section, let $G=(V,E)$ be a fixed connected graph of genus $g$ and maximum degree $d_{\max}>0$, and let $M$ be the polyhedral thickening of $G$, equipped with the natural complex structure provided by Theorem~\ref{thm::poly Riemann}. For each
$v\in V$ and $e\in E$, let $Q_v$ and $R_e$ denote the Euclidean squares and rectangles, respectively, provided by Lemma~\ref{lem::poly thickening}.

\begin{lemma}\label{lem::Sobolev}
For every $v\in V$ and $e\in E$, the interiors $Q_v^\circ$ and $R_e^\circ$ are bounded Lipschitz domains. Moreover, for every $f\in C^\infty(M)$,
\[f|_{Q_v^\circ}\in H^1(Q_v^\circ)\quad\text{and}\quad f|_{R_e^\circ}\in H^1(R_e^\circ).\]
\end{lemma}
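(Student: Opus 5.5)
The argument splits into two parts: the geometry of the pieces, and the regularity of $f$ on them.

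\emph{Lipschitz domains.} By Lemma~\ref{lem::poly thickening}, $Q_v$ is isometric to $[0,1]^2$ and $R_e$ to $[0,1]\times[0,w]$. We identify $Q_v^\circ$ with the open unit square $(0,1)^2\subset\mathbb{R}^2$ and $R_e^\circ$ with $(0,1)\times(0,w)$ via these isometries. Open rectangles are bounded convex sets, and bounded convex open sets are Lipschitz domains, so the first claim follows.

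\emph{Membership in $H^1$.} The key observation is that the interiors $Q_v^\circ$ and $R_e^\circ$ contain no cone points. Each is an open subset of the polyhedral surface that is flat, isometric to an open Euclidean domain, with every point having a neighborhood isometric to a planar disk. In the proof of Theorem~\ref{thm::poly Riemann}, the ordinary Euclidean charts at interior points of triangles or glued edges are oriented isometries. Hence the Euclidean coordinate $\zeta=x+iy$ on $Q_v^\circ$ (resp.\ $R_e^\circ$) is compatible with the complex atlas, and therefore with the smooth structure of $M$.

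Consequently, for $f\in C^\infty(M)$, the function $f\circ\iota^{-1}$ is smooth on the open rectangle, where $\iota$ denotes the isometric identification. It remains to check that $f$ and its gradient lie in $L^2$, which I would do in two ways. First, $f$ is bounded because $M$ is compact. Second, by Lemma~\ref{lem::surface replace}, $|\nabla_h f|_h^2=\rho^{-1}|\nabla_{\mathfrak g}f|^2_{\mathfrak g}$ and $dA_h=\rho\,dA_{\mathfrak g}$. Thus $\int_{Q_v^\circ}|\nabla f|^2\,dx\,dy$, which is the Euclidean Dirichlet energy since $h=dx^2+dy^2$ there, is at most $\int_M|\nabla_{\mathfrak g}f|_{\mathfrak g}^2\,dA_{\mathfrak g}<\infty$ by the identity \eqref{eq::energy} restricted to a subset. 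Classical derivatives of a smooth function are its weak derivatives, so $f|_{Q_v^\circ}\in H^1(Q_v^\circ)$, and likewise for $R_e^\circ$.

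\emph{Main obstacle.} The only subtle point is whether cone points could sit on the boundary $\partial Q_v$, for instance at corners or at interval endpoints, and whether that spoils smoothness in Euclidean coordinates. Since only the open interior matters for $H^1$, and the energy bound is global, such boundary behavior is harmless. Smoothness is needed only locally in the interior, with integrability coming from the conformal invariance \eqref{eq::energy}.
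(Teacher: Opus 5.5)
Your proposal is correct and follows essentially the same route as the paper. It derives Lipschitz regularity from the rectangular shape, classical smoothness of $f$ from the flat interiors, $L^2$-integrability of $f$ from boundedness on the compact surface and finite area, and $L^2$-integrability of the gradient from the conformal invariance \eqref{eq::energy} of Lemma~\ref{lem::surface replace}; your explicit check that the interiors contain no cone points, so the Euclidean coordinates are charts of the complex atlas, simply spells out what the paper asserts in one line.
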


\begin{proof}
    All $Q_v$ and $R_e$ in $M$ are Euclidean polygons, and hence their interiors $Q_v^\circ$ and $R_e^\circ$ are Lipschitz domains. On these interiors the metric $h$ is Euclidean, $f$ is classically smooth and its classical first order derivatives are its weak derivatives.
	
	Since $M$ is compact, every $f\in C^\infty(M)$ is bounded. Moreover, the polyhedral area
	measure is finite, and therefore $\int_M |f|^2\,dA_h<\infty$. Hence
    \[f|_{Q_v^\circ}\in L^2(Q_v^\circ),~f|_{R_e^\circ}\in L^2(R_e^\circ).\]
	Applying Lemma \ref{lem::surface replace} to $(M,h)$, we have
    \begin{equation}\label{eq::nabla g=nabla h}
	    \int_M|\nabla_{\mathfrak{g}}f|_{\mathfrak{g}}^2\,dA_{\mathfrak{g}}
	    =\int_M|\nabla_hf|_h^2\,dA_h<\infty
    \end{equation}
    Restricting it to the interiors of the $Q_v$ and $R_e$, their first order weak derivatives also belong to  $L^2(Q_v^\circ)$ and $L^2(R_e^\circ)$, respectively.
\end{proof}

From now on, we write $Q_v$ and $R_e$ for the interiors $Q_v^\circ$ and $R_e^\circ$, respectively. Since their boundaries have zero $dA_h$-measure, this abuse of notation does not affect any of the integrals below. Let $f\in C^\infty(M)$. By Lemma~\ref{lem::Sobolev}, its restrictions to these $Q_v$ and $R_e$ belong to the corresponding Sobolev spaces. Hence the trace theorem and Poincar\'e inequality from Section~\ref{section::2} apply directly. Define the vertex average by
\begin{equation*}
	\phi_v(f):=\frac{1}{dA_h(Q_v)}\int_{Q_v}f\,dA_h=\int_{Q_v}f\,dA_h,
\end{equation*}
where we used $dA_h(Q_v)=1$. For each incident pair $v\in e$, define
the edge average
\begin{equation*}
p_{v,e}(f):=\frac{1}{w}\int_{A_{v,e}}\Tr f\,ds_h.
\end{equation*}

\begin{lemma}\label{lem::poincare and trace}
    Use the notation defined above. There exist universal constants $C_P,C_T>0$ such that
    \begin{equation}\label{eq::P leq}
    	\int_{Q_v}|f-\phi_v(f)|^2\,dA_h\leq C_P\int_{Q_v}|\nabla_h f|_h^2\,dA_h,
    \end{equation}
and
    \begin{equation}\label{eq::T leq}
        \int_{\partial Q_v}|\Tr f-\phi_v(f)|^2\,ds_h\leq C_T\int_{Q_v}|\nabla_h f|_h^2\,dA_h,
    \end{equation} for every $f\in C^\infty(M)$. The constants $C_P$ and $C_T$ are independent of $G$, $d_{\max}$ and $v$.
\end{lemma}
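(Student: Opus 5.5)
The plan is to reduce everything to a single fixed reference domain, the open unit square $Q=(0,1)^2\subset\mathbb{R}^2$. By Lemma~\ref{lem::poly thickening}(1), each $Q_v$ is isometric to $([0,1]\times[0,1],dx^2+dy^2)$. Fix such an isometry $\iota_v:Q\to Q_v$. On $Q_v$ the metric $h$ is Euclidean, so under $\iota_v$ the three objects we need are preserved exactly:
\begin{enumerate}[(a)]
\item $dA_h$ pulls back to Lebesgue measure $dA$;
\item the arclength $ds_h$ on $\partial Q_v$ pulls back to $ds$ on $\partial Q$;
\item $|\nabla_h f|_h^2$ pulls back to $|\nabla (f\circ\iota_v)|^2$.
\end{enumerate}
By Lemma~\ref{lem::Sobolev}, $F:=f\circ\iota_v\in H^1(Q)$. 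Also $\phi_v(f)=\int_{Q_v}f\,dA_h=\int_Q F\,dA=\overline F$, because $|Q|=1$. The trace is compatible with $\iota_v$: for smooth functions up to the boundary, $\Tr$ is just restriction, and both sides are continuous in $H^1$. Hence $\Tr f\circ\iota_v=\Tr F$ on $\partial Q$.

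With this identification, \eqref{eq::P leq} is exactly Lemma~\ref{lem::P leq} applied to $\Omega=Q$. So I set $C_P:=C_P(Q)$.

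For \eqref{eq::T leq}, I apply Lemma~\ref{lem::T leq} on $\Omega=Q$ with $\Gamma=\partial Q$ and set $C_T:=C_T(Q)$. If one wants to see where Lemma~\ref{lem::T leq} comes from, apply the trace inequality of Lemma~\ref{lem::Trace} to $F-\overline F$; this is allowed because constants lie in $H^1(Q)$ and have trace equal to the constant. That gives
\[
\int_{\partial Q}|\Tr F-\overline F|^2\,ds\le C_{tr}(Q)\Bigl(\int_Q|F-\overline F|^2\,dA+\int_Q|\nabla F|^2\,dA\Bigr)\le C_{tr}(Q)\,(C_P(Q)+1)\int_Q|\nabla F|^2\,dA.
\]
Transporting back through $\iota_v$ yields \eqref{eq::T leq}.

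Universality is then automatic. $C_P(Q)$ and $C_T(Q)$ depend only on the fixed domain $Q$, not on $v$, $G$ or $d_{\max}$. The width $w=1/(10d_{\max})$ and the intervals $A_{v,e}$ never enter, since we integrate over the whole boundary $\partial Q_v$.

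The only point needing care is the compatibility of the trace with the identification $\iota_v$. Concretely, we must make sure that ``$\Tr f$ on $\partial Q_v$'' means the trace of $f|_{Q_v^\circ}$ from inside the square, and that $\iota_v$ extends to a bi-Lipschitz (indeed isometric) map of closures. This holds because $Q_v$ is a genuine Euclidean square in $M$ and the cone points lie away from the interior of $Q_v$. Moreover, since $f$ is smooth on $M$ and the closure of $Q_v$ carries the flat metric $h$, $f$ is continuous on $\overline{Q_v}$ and $\Tr f=f|_{\partial Q_v}$. So even this step is essentially immediate.
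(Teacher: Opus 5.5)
Your proposal is correct and follows essentially the same route as the paper: on each $Q_v$ the metric $h$ is Euclidean, so you identify $Q_v$ with the fixed unit square and apply the Poincar\'e inequality and its trace--Poincar\'e consequence there, which makes the constants depend only on the square. The details you add, namely deriving the boundary estimate from the trace inequality applied to $F-\overline F$ and checking that the trace is compatible with the isometry (including continuity of $f$ up to $\partial Q_v$, where cone points may sit), are accurate elaborations of steps the paper leaves implicit.
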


\begin{proof}
	Since each $Q_v$ is the Euclidean unit square and each $R_e$ is the $[0,1]\times [0,w]$ Euclidean rectangle, Lemma \ref{lem::P leq} and \ref{lem::T leq} yield
	\begin{equation*}
		\int_{Q_v}|f-\phi_v(f)|^2\,dA_h \leq C_P\int_{Q_v}|\nabla f|^2\,dA,
	\end{equation*}
	and
	\begin{equation*}
		\int_{\partial Q_v}|\Tr f-\phi_v(f)|^2\,ds_h\leq C_T\int_{Q_v}|\nabla f|^2\,dA
	\end{equation*}
	for some universal constants $C_P,C_T>0$ and every $f\in C^\infty(M)$. Since $h=dx^2+dy^2$ on $Q_v$, we have
	\begin{equation}\label{eq:: nabla=nabla h}
		|\nabla f|^2=|\nabla_hf|_h^2,~dA_h=dA
	\end{equation}
     on $Q_v$. The result follows.
\end{proof}

\begin{lemma}\label{lem::leq A_v,e}
	For every $v\in V$ and every $f\in C^\infty(M)$,
	\begin{equation*}
	    \sum_{e\ni v}w\bigl(\phi_v(f)-p_{v,e}(f)\bigr)^2
        \leq C_T\int_{Q_v}|\nabla_hf|_h^2\,dA_h,
    \end{equation*}
where $C_T$ is as in Lemma~\ref{lem::poincare and trace}.
\end{lemma}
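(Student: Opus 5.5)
The plan is to reduce the statement to the boundary trace estimate \eqref{eq::T leq} of Lemma~\ref{lem::poincare and trace}, using Jensen's (equivalently Cauchy--Schwarz's) inequality on each interval $A_{v,e}$ and the disjointness of these intervals. No further estimates should be needed.

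First, fix $v$ and an edge $e\ni v$. Since $|A_{v,e}|=w$ and $\phi_v(f)$ is a constant, we can write
\[
\phi_v(f)-p_{v,e}(f)=\frac{1}{w}\int_{A_{v,e}}\bigl(\phi_v(f)-\Tr f\bigr)\,ds_h .
\]
Applying Cauchy--Schwarz on $A_{v,e}$ then gives
\[
\bigl(\phi_v(f)-p_{v,e}(f)\bigr)^2\leq \frac{1}{w}\int_{A_{v,e}}|\Tr f-\phi_v(f)|^2\,ds_h ,
\]
which is equivalent to
\[
w\bigl(\phi_v(f)-p_{v,e}(f)\bigr)^2\leq \int_{A_{v,e}}|\Tr f-\phi_v(f)|^2\,ds_h .
\]

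Next, sum over all $e\ni v$. By Lemma~\ref{lem::poly thickening}(2), the intervals $A_{v,e}$ are pairwise disjoint subsets of $\partial Q_v$. The integrand is nonnegative, so the sum of the right-hand sides is at most $\int_{\partial Q_v}|\Tr f-\phi_v(f)|^2\,ds_h$. Inequality \eqref{eq::T leq} bounds this by $C_T\int_{Q_v}|\nabla_h f|_h^2\,dA_h$, which is the claim with the same constant $C_T$.

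There is no genuine obstacle. The only point to check is that the trace is taken from the side of $Q_v$, consistent with how $p_{v,e}$ is defined, so that \eqref{eq::T leq} applies as stated. For $f\in C^\infty(M)$ this holds automatically, since $f$ is continuous across $\partial Q_v$ and $\Tr f=f|_{\partial Q_v}$.
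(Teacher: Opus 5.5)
Your proof is correct and is essentially the paper's argument: Cauchy--Schwarz on each $A_{v,e}$ gives $w\bigl(\phi_v(f)-p_{v,e}(f)\bigr)^2\leq\int_{A_{v,e}}|\Tr f-\phi_v(f)|^2\,ds_h$. Summing over the pairwise disjoint intervals $A_{v,e}\subset\partial Q_v$ and applying \eqref{eq::T leq} then finishes the proof with the same constant $C_T$.
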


\begin{proof}
	By Cauchy--Schwarz inequality, we have
	\[w\bigl(p_{v,e}(f)-\phi_v(f)\bigr)^2\leq\int_{A_{v,e}}|\Tr f-\phi_v(f)|^2\,ds_h.\]
    For fixed $v$, the intervals $A_{v,e}$, $v\in e$, are pairwise disjoint. So summing over $e\ni v$ and applying \eqref{eq::T leq}, we obtain the desired result.
\end{proof}

\begin{lemma}\label{lem::leq R_e}
    For any $e=\{u,v\}\in E$ and every $f\in C^\infty(M)$,
    \begin{equation*}
        w\bigl(p_{u,e}(f)-p_{v,e}(f)\bigr)^2\leq\int_{R_e}|\nabla_hf|_h^2\,dA_h.
    \end{equation*}
\end{lemma}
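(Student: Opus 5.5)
The plan is to work in Euclidean coordinates on the rectangle and apply the fundamental theorem of calculus along horizontal segments, followed by Cauchy--Schwarz twice. By Lemma~\ref{lem::poly thickening}(3), identify $R_e$ isometrically with $([0,1]\times[0,w],dx^2+dy^2)$, so that the short side $\{0\}\times[0,w]$ corresponds to $A_{u,e}$ and $\{1\}\times[0,w]$ corresponds to $A_{v,e}$. On $R_e$ we have $h=dx^2+dy^2$, $dA_h=dx\,dy$ and $|\nabla_h f|_h^2=|\nabla f|^2\geq (\partial_x f)^2$.

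First, since $f\in C^\infty(M)$ is continuous on the closed rectangle, its trace on the short sides is just the restriction. The gluing in Lemma~\ref{lem::poly thickening} is an isometry of the short sides, so the arclength measure $ds_h$ on $A_{u,e}$ pulls back to $dy$ on $\{0\}\times[0,w]$. Hence
\[
p_{u,e}(f)=\frac1w\int_0^w f(0,y)\,dy,\qquad p_{v,e}(f)=\frac1w\int_0^w f(1,y)\,dy .
\]

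There is one technical point. On the closed rectangle $f$ is smooth up to the short sides in the Euclidean coordinates of $R_e$, because away from cone points the chart of $M$ is Euclidean. Even if corners of $R_e$ were cone points, the function $f$ would still be continuous and Lipschitz along almost every horizontal segment. So for every $y\in(0,w)$ we have $f(1,y)-f(0,y)=\int_0^1\partial_x f(x,y)\,dx$. To justify this, I would note that the open horizontal segments lie in the interior of $R_e$, where the metric is Euclidean and $f$ is $C^1$, and then pass to the endpoints by continuity.

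Then
\[
p_{u,e}(f)-p_{v,e}(f)=-\frac1w\int_0^w\!\!\int_0^1\partial_x f\,dx\,dy,
\]
and Cauchy--Schwarz on $[0,1]\times[0,w]$, a set of area $w$, gives
\[
\bigl(p_{u,e}(f)-p_{v,e}(f)\bigr)^2\leq \frac1{w^2}\cdot w\int_{R_e}(\partial_x f)^2\,dx\,dy\leq\frac1w\int_{R_e}|\nabla_h f|_h^2\,dA_h .
\]
Multiplying by $w$ yields the claim.

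The only potential obstacle is identifying the trace averages $p_{u,e},p_{v,e}$, which are defined via $\partial Q_v$, with the boundary values of $f$ seen from inside $R_e$. This works because $f$ is globally continuous on $M$, so both traces equal the restriction of $f$ to the common segment.
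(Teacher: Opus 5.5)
Your proposal is correct and is essentially the paper's argument. The paper first averages over $y$, setting $a(x)=\frac1w\int_0^w F(x,y)\,dy$, and then applies the fundamental theorem of calculus and Cauchy--Schwarz in $x$; you apply the fundamental theorem of calculus on each horizontal segment (legitimate for $0<y<w$, since $(0,y)$ and $(1,y)$ are non-cone points) and then Cauchy--Schwarz over the rectangle, which is the same computation with the order of integration swapped and gives the same constant.
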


\begin{proof}
    Identify $R_e$ isometrically with the Euclidean rectangle $[0,1]\times[0,w]$ and put $F=f|_{R_e}$. For $x\in[0,1]$, define
    \[a(x)=\frac1w\int_0^w F(x,y)\,dy.\]
    Since $f$ is smooth on $M$, for every $x_0\in(0,1)$ the function $F$ is smooth in Euclidean coordinates near $\{x_0\}\times[0,w]$. Then $a(x)$ is smooth in a neighborhood of $x_0$. Hence $a\in C^\infty(0,1)$, and
    \begin{equation}\label{eq::a derivative}
        a'(x)=\frac1w\int_0^w\partial_xF(x,y)\,dy,~ x\in(0,1).
    \end{equation}
    Moreover, $F$ is continuous on the closed rectangle, because it is the restriction of the continuous function $f$ on $M$. Therefore $a$ is continuous on $[0,1]$ and
    \[a(0)=p_{u,e}(f),~a(1)=p_{v,e}(f).\]
    By Cauchy--Schwarz inequality in the $y$-variable and Fubini's theorem,
    \[\int_0^1|a(x)|^2\,dx\leq \frac1w\int_0^1\int_0^w|F(x,y)|^2\,dy\,dx<\infty,\]
    and using \eqref{eq::a derivative},
    \begin{equation}\label{eq::int a'}
    	\int_0^1|a'(x)|^2\,dx\leq \frac1w\int_0^1\int_0^w|\partial_xF(x,y)|^2\,dy\,dx<\infty.
    \end{equation}
    The last two integrals are finite because $F\in H^1((0,1)\times(0,w))$. Since $a\in C^1(0,1)$, its classical derivative is also its weak derivative. Consequently, $a\in H^1(0,1)$.

    The $1$-dimensional Sobolev fundamental theorem gives
    \[a(1)-a(0)=\int_0^1a'(x)\,dx.\]
    Therefore, Cauchy--Schwarz inequality on $(0,1)$ yields
    \[w\bigl|p_{u,e}(f)-p_{v,e}(f)\bigr|^2 =w\left|\int_0^1a'(x)\,dx\right|^2\leq w\int_0^1|a'(x)|^2\,dx\]
    From \eqref{eq::int a'}, we get
    \[w\bigl|p_{u,e}(f)-p_{v,e}(f)\bigr|^2 \leq\int_0^1\int_0^w|\partial_xF(x,y)|^2\,dy\,dx.\]
    Finally, because $h=dx^2+dy^2$ on $R_e$,
    \[|\partial_xF|^2\leq|\nabla F|^2=|\nabla_hf|_h^2,~dA_h=dy\,dx.\]
    The result follows.
\end{proof}

\subsection{Global estimates}

In this subsection, we establish some global estimates. We retain the
notation introduced in the previous subsection.

\begin{lemma}\label{lem::transfer1}
There is a universal constant $C_0>0$ such that
    \begin{equation*}
        \left\langle\Delta\bigl((\phi_v(f))_{v\in V}\bigr),(\phi_v(f))_{v\in V}\right\rangle\leq C_0d_{\max}\int_M|\nabla_{\mathfrak g}f|_{\mathfrak g}^2\,dA_{\mathfrak g}
    \end{equation*}
for every $f\in C^\infty(M)$.
\end{lemma}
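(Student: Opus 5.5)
We bound each edge term separately and then sum. For an edge $e=\{u,v\}$ we use the chain $\phi_u(f)\to p_{u,e}(f)\to p_{v,e}(f)\to\phi_v(f)$. By the elementary inequality $(a+b+c)^2\le 3(a^2+b^2+c^2)$,
\[
\bigl(\phi_u(f)-\phi_v(f)\bigr)^2\le 3\Bigl[\bigl(\phi_u(f)-p_{u,e}(f)\bigr)^2+\bigl(p_{u,e}(f)-p_{v,e}(f)\bigr)^2+\bigl(p_{v,e}(f)-\phi_v(f)\bigr)^2\Bigr].
\]
Multiplying by $w$ turns each bracketed term into a quantity already controlled by a local estimate.

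Next we sum over $e\in E$. The middle terms give $\sum_{e}w\bigl(p_{u,e}(f)-p_{v,e}(f)\bigr)^2\le\sum_e\int_{R_e}|\nabla_hf|_h^2\,dA_h$, by Lemma~\ref{lem::leq R_e}. Every incident pair $(v,e)$ with $v\in e$ occurs exactly once among the outer terms. Regrouping those terms by vertex and applying Lemma~\ref{lem::leq A_v,e} gives
\[
\sum_{v\in V}\sum_{e\ni v}w\bigl(\phi_v(f)-p_{v,e}(f)\bigr)^2\le C_T\sum_{v}\int_{Q_v}|\nabla_hf|_h^2\,dA_h .
\]
Together these yield
\[
w\,\langle\Delta\phi,\phi\rangle\le 3\max\{1,C_T\}\Bigl(\sum_v\int_{Q_v}+\sum_e\int_{R_e}\Bigr)|\nabla_hf|_h^2\,dA_h .
\]

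To pass to an integral over all of $M$ we need two facts: the interiors of the $Q_v$ and $R_e$ are pairwise disjoint by Lemma~\ref{lem::poly thickening}, and their boundaries have measure zero. These give
\[
\sum_v\int_{Q_v}+\sum_e\int_{R_e}\le\int_M|\nabla_hf|_h^2\,dA_h .
\]
Equation \eqref{eq::energy} of Lemma~\ref{lem::surface replace} then replaces the right-hand side by $\int_M|\nabla_{\mathfrak g}f|_{\mathfrak g}^2\,dA_{\mathfrak g}$. Dividing by $w=1/(10d_{\max})$ produces the claim with $C_0=30\max\{1,C_T\}$. This constant is universal because $C_T$ is, by Lemma~\ref{lem::poincare and trace}.

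The main subtlety is making sure the factor $w$ is used consistently. The local lemmas carry exactly one factor $w$, which is what converts into $d_{\max}$. Everything else is bookkeeping, in particular checking that each pair $(v,e)$ is counted only once, so that no extra factor $d_{\max}$ enters.
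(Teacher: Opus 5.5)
Your proposal is correct and follows the paper's proof essentially step for step. You use the same three-term splitting $\phi_u\to p_{u,e}\to p_{v,e}\to\phi_v$, apply Lemma~\ref{lem::leq A_v,e} once per vertex and Lemma~\ref{lem::leq R_e} once per edge, use the disjointness of the pieces together with \eqref{eq::energy}, and finish with $w^{-1}=10d_{\max}$. Your explicit value $C_0=30\max\{1,C_T\}$ is simply a concrete instance of the universal constant the paper leaves unspecified.
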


\begin{proof}
	For convenience, we omit the variable $f$ in the proof.
    Let $\{u,v\}=e\in  E$. We write
    \[\phi_u-\phi_v=(\phi_u-p_{u,e})+(p_{u,e}-p_{v,e})+(p_{v,e}-\phi_v).\]
    Hence Cauchy--Schwarz inequality gives
    \[w(\phi_u-\phi_v)^2\leq 3w(\phi_u-p_{u,e})^2+3w(p_{u,e}-p_{v,e})^2+3w(p_{v,e}-\phi_v)^2.\]
    Sum over all edges and apply Lemma \ref{lem::leq A_v,e} once at every vertex and Lemma \ref{lem::leq R_e} once on every edge. The interiors of all vertex squares and edge rectangles are pairwise disjoint, so we have
    \[w\sum_{\{u,v\}\in E}(\phi_u-\phi_v)^2\leq C\int_M|\nabla_hf|_h^2\,dA_h=C\int_M|\nabla_{\mathfrak{g}}f|_{\mathfrak{g}}^2\,dA_{\mathfrak{g}}.\]
    Since $w^{-1}=10d_{\max}$, the result follows after changing the universal constant.
\end{proof}

Let $B$ be a nonempty subset of $V$, and set
\[\Omega_B=\bigcup_{b\in B}Q_b.\]
Define the measure $\nu$ by
\begin{equation}\label{eq::d nu}
    d\nu=\mathbf{1}_{\Omega_B}\,dA_h.
\end{equation}
By Lemma~\ref{lem::surface replace}, $d\nu=\mathbf{1}_{\Omega_B}\rho\,dA_{\mathfrak g}$.
Since $\nu$ is the restriction of the finite Radon measure $dA_h$ to the Borel set $\Omega_B$, it is itself a finite Radon measure. Moreover, $dA_h\ll dA_{\mathfrak g}$ implies $\nu\ll dA_{\mathfrak g}$. Finally, the sets $Q_b$, $b\in B$, have
pairwise disjoint interiors, each with $dA_h$-measure equal to $1$, and
their boundaries have zero $dA_h$-measure. Hence $\nu(M)=dA_h(\Omega_B)=|B|$.

\begin{lemma}\label{lem::transfer2}
    Every $f\in C^\infty(M)$ satisfies
    \begin{equation*}
    	\sum_{b\in B}\phi_b(f)^2\geq\int_Mf^2\,d\nu-C_P\int_M|\nabla_{\mathfrak{g}}f|_{\mathfrak{g}}^2\,dA_{\mathfrak{g}}.
    \end{equation*}
\end{lemma}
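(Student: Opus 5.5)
The plan is to work square by square and then sum over $b\in B$. Since $\nu$ is $dA_h$ restricted to $\Omega_B$, and the squares $Q_b$ have disjoint interiors with boundaries of measure zero, we have
\[\int_M f^2\,d\nu=\sum_{b\in B}\int_{Q_b}f^2\,dA_h.\]
It therefore suffices to prove, for each $b\in B$,
\[\phi_b(f)^2\geq \int_{Q_b}f^2\,dA_h-C_P\int_{Q_b}|\nabla_hf|_h^2\,dA_h.\]

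This per-square inequality is an exact variance identity. Because $dA_h(Q_b)=1$ and $\phi_b(f)=\int_{Q_b}f\,dA_h$, expanding the square gives
\[\int_{Q_b}|f-\phi_b(f)|^2\,dA_h=\int_{Q_b}f^2\,dA_h-2\phi_b(f)^2+\phi_b(f)^2=\int_{Q_b}f^2\,dA_h-\phi_b(f)^2.\]
Combining this with the Poincaré estimate \eqref{eq::P leq} of Lemma~\ref{lem::poincare and trace} gives
\[\int_{Q_b}f^2\,dA_h-\phi_b(f)^2\leq C_P\int_{Q_b}|\nabla_hf|_h^2\,dA_h,\]
which is the per-square claim.

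Next, sum the per-square inequalities over $b\in B$. The gradient integrals over the disjoint squares $Q_b$ are bounded above by $\int_M|\nabla_hf|_h^2\,dA_h$. By \eqref{eq::energy} of Lemma~\ref{lem::surface replace}, this equals $\int_M|\nabla_{\mathfrak g}f|_{\mathfrak g}^2\,dA_{\mathfrak g}$. This yields the statement.

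No step is a real obstacle. The only points needing care are measure-theoretic. First, the boundaries of the $Q_b$ must be null, so that the sum over squares equals $\int f^2\,d\nu$. Second, \eqref{eq::energy} must apply to all of $M$, including the cone points; this is already provided by Lemma~\ref{lem::surface replace}.
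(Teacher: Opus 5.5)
Your proposal is correct and follows the paper's proof essentially step for step. Both arguments use the per-square identity $\int_{Q_b}f^2\,dA_h=\phi_b(f)^2+\int_{Q_b}|f-\phi_b(f)|^2\,dA_h$ and the Poincar\'e bound \eqref{eq::P leq}, then sum over $b\in B$ using the disjoint interiors and null boundaries of the $Q_b$, and finish with the energy identity from Lemma~\ref{lem::surface replace}.
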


\begin{proof}
	Fix $b\in B$. Since $dA_h(Q_b)=1$, $\phi_b(f)=\int_{Q_b}f\,dA_h$. Hence
    \[\int_{Q_b}\bigl(f-\phi_b(f)\bigr)\,dA_h=\int_{Q_b}f\,dA_h-\phi_b(f)=0.\]
    Write $f=\phi_b(f)+(f-\phi_b(f))$,  then we have
    \begin{equation*}\label{eq::f^2 on Qb}
    	\begin{aligned}
    		\int_{Q_b}f^2\,dA_h=
    		\phi_b(f)^2+\int_{Q_b}|f-\phi_b(f)|^2\,dA_h.
    	\end{aligned}
    \end{equation*}
    Rearranging it and applying the Poincar\'e inequality \eqref{eq::P leq}, we get
    \[\phi_b(f)^2\geq \int_{Q_b}f^2\,dA_h-C_P\int_{Q_b}|\nabla_hf|_h^2\,dA_h.\]
    Summing this inequality over $b\in B$, we obtain
    \begin{equation}\label{eq::transfer2.1}
    	\sum_{b\in B}\phi_b(f)^2\geq \sum_{b\in B}\int_{Q_b}f^2\,dA_h-C_P\sum_{b\in B}\int_{Q_b}|\nabla_hf|_h^2\,dA_h.
    \end{equation}
    The interiors of the squares $Q_b$ are pairwise disjoint and their boundaries have zero $dA_h$-measure. By the definition $d\nu=\mathbf 1_{\Omega_B}dA_h$, we have
    \begin{equation}\label{eq::transfer2.2}
    	\sum_{b\in B}\int_{Q_b}f^2\,dA_h=\int_{\Omega_B}f^2\,dA_h=\int_Mf^2\,d\nu.
    \end{equation}
    The same disjointness gives
    \begin{equation}\label{eq::transfer2.3}
    	\sum_{b\in B}\int_{Q_b}|\nabla_hf|_h^2\,dA_h\leq\int_M|\nabla_hf|_h^2\,dA_h.
    \end{equation}
    By substituting \eqref{eq::transfer2.2} and \eqref{eq::transfer2.3} into \eqref{eq::transfer2.1}, we get
    \[\sum_{b\in B}\phi_b(f)^2\geq\int_Mf^2\,d\nu-C_P\int_M|\nabla_{h}f|_{h}^2\,dA_{h}.\]
    Using \eqref{eq::nabla g=nabla h}, we obtain the desired estimate.
\end{proof}

\section{The transfer principle}\label{sec:5}

In this section, we prove our transfer principle, namely
Theorem~\ref{thm:principle}. It is enough to prove the following
theorem.

Throughout this section, we retain the notation $C_P$ and $C_0$ for the universal
constants in Lemmas~\ref{lem::poincare and trace} and
\ref{lem::transfer1}, respectively. Let $M$ be the polyhedral thickening of a connected graph $G$,
equipped with the natural complex structure from
Theorem~\ref{thm::poly Riemann}. Let $\mathfrak{g}$ be the smooth
conformal Riemannian metric on $M$ from
Lemma~\ref{lem::surface replace}, let $B$ be a boundary of $G$ and let $\nu$ be the finite Radon
measure defined in \eqref{eq::d nu}.

\begin{theorem}\label{thm::transfer}
Use the notation given in the above paragraph. If $C_P\lambda_k(M,\mathfrak{g},\nu)<\frac12,$
then
$
\sigma_k(G,B)
\leq
2C_0d_{\max}\lambda_k(M,\mathfrak{g},\nu)$, where $1\leq k\leq |B|$.
\end{theorem}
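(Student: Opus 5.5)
The plan is to transfer a near-optimal $k$-dimensional test space from the surface to the graph through the linear map $T:C^\infty(M)\to\R^B$, $T f=(\phi_b(f))_{b\in B}$. Then we bound the graph Rayleigh quotient using Lemmas~\ref{lem::transfer1} and \ref{lem::transfer2}, and conclude with the min-max characterization \eqref{eq::minmax R}.

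Write $\lambda=\lambda_k(M,\mathfrak g,\nu)$ and fix $\varepsilon>0$ small enough that $C_P(\lambda+\varepsilon)<\frac12$. By \eqref{eq::minmax R on surface}, there is a $k$-dimensional subspace $F\subset C^\infty(M)$ such that every $f\in F$ satisfies
\[\int_M|\nabla_{\mathfrak g}f|^2_{\mathfrak g}\,dA_{\mathfrak g}\le(\lambda+\varepsilon)\int_Mf^2\,d\nu.\]
Lemma~\ref{lem::transfer2} then gives
\[\sum_{b\in B}\phi_b(f)^2\ge\bigl(1-C_P(\lambda+\varepsilon)\bigr)\int_Mf^2\,d\nu>\tfrac12\int_Mf^2\,d\nu.\]

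The key step is to show that $T|_F$ is injective, so that $W:=T(F)\subseteq\R^B$ has dimension $k$. Suppose $f\in F\setminus\{0\}$ satisfies $Tf=0$. The display above forces $\int_M f^2\,d\nu=0$, and then the first inequality forces $\int_M|\nabla_{\mathfrak g}f|^2\,dA_{\mathfrak g}=0$. So $f$ is constant on the connected surface $M$. Since $\nu\neq0$, that constant must be $0$, a contradiction. This is where the hypothesis $C_P\lambda<\frac12$ (and the fact that $\nu(M)=|B|>0$) is really used. I expect this to be the only delicate point, together with the case $\int_M f^2\,d\nu=0$ where the Rayleigh quotient is $+\infty$; the strict inequality for $\lambda+\varepsilon$ handles that case.

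Now take $\varphi=Tf\in W\setminus\{0\}$ and let $\psi=(\phi_v(f))_{v\in V}$, which extends $\varphi$. By the minimizing property \eqref{eq::Detla hatf,f} and Lemma~\ref{lem::transfer1},
\[\langle\Delta\widehat\varphi,\widehat\varphi\rangle\le\langle\Delta\psi,\psi\rangle\le C_0d_{\max}(\lambda+\varepsilon)\int_Mf^2\,d\nu\le 2C_0d_{\max}(\lambda+\varepsilon)\sum_{b\in B}\varphi(b)^2.\]
Applying \eqref{eq::minmax R} with $W$ gives $\sigma_k(G,B)\le2C_0d_{\max}(\lambda+\varepsilon)$. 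Letting $\varepsilon\to0$ yields $\sigma_k(G,B)\le 2C_0d_{\max}\lambda_k(M,\mathfrak g,\nu)$.
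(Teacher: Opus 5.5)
Your proposal is correct and follows the paper's proof essentially step for step. It makes the same choice of $\varepsilon$ and near-optimal $k$-dimensional space $F$, uses Lemma~\ref{lem::transfer2} for the lower bound on $\sum_{b\in B}\phi_b(f)^2$, proves injectivity of $f\mapsto(\phi_b(f))_{b\in B}$ by the same constancy argument, and combines \eqref{eq::Detla hatf,f}, Lemma~\ref{lem::transfer1} and \eqref{eq::minmax R} before letting $\varepsilon\to0$ (one cosmetic point: the strict inequality $>\tfrac12\int_M f^2\,d\nu$ should read $\geq$, since both sides vanish when $\int_M f^2\,d\nu=0$).
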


\begin{proof}
	We suppose $d_{\max}>0$, since the case $d_{\max}=0$ is trivial.
	Choose $\varepsilon>0$ so that $C_P(\lambda_k(M,\mathfrak{g},\nu)+\varepsilon)<1/2$. By \eqref{eq::minmax R on surface}, there is a $k$-dimensional subspace $F\subset C^\infty(M)$ such that
    \begin{equation}\label{eq::test F}
    	\int_M|\nabla_{\mathfrak{g}}f|_{\mathfrak{g}}^2\,dA_{\mathfrak{g}}\leq(\lambda_k(M,\mathfrak{g},\nu)+\varepsilon)\int_Mf^2\,d\nu
    \end{equation}
    for every $f\in F$. Substituting \eqref{eq::test F} into Lemma \ref{lem::transfer2}, we get
    \begin{equation}\label{eq::phi_b geq}
    	\begin{aligned}
    		\sum_{b\in B}\phi_b(f)^2
    		&\geq\int_Mf^2\,d\nu-C_P\int_M|\nabla_{\mathfrak{g}}f|_{\mathfrak{g}}^2\,dA_{\mathfrak{g}}\\
            &\geq\bigl(1-C_P(\lambda_k(M,\mathfrak{g},\nu)+\varepsilon)\bigr)\int_Mf^2\,d\nu\\
            &\geq\frac12\int_Mf^2\,d\nu.
        \end{aligned}
    \end{equation}
    Define
    \[T_B:F\longrightarrow\R^B,~T_Bf=(\phi_b(f))_{b\in B}.\]
    We claim that $T_B$ is injective. If $T_Bf=0$, then \eqref{eq::phi_b geq} yields $\int_Mf^2\,d\nu=0$, and from \eqref{eq::test F} we know that $\int_M|\nabla_{\mathfrak{g}}f|_{\mathfrak{g}}^2\,dA_{\mathfrak{g}}=0$. Hence $f$ is constant on the connected smooth surface $M$. Since $\nu(M)>0$ and the $L^2(\nu)$ norm of $f$ is zero, the constant is zero. Thus $f=0$.

    Therefore $T_B(F)$ has dimension $k$. The full vector $(\phi_v(f))_{v\in V}$ extends $T_Bf$ from $B$ to $V$. Then by
    \eqref{eq::Detla hatf,f}, we have
    \[\langle\Delta \widehat{T_Bf},\widehat{T_Bf}\rangle\leq\left\langle\Delta \bigl((\phi_v(f))_{v\in V}\bigr),(\phi_v(f))_{v\in V}\right\rangle.\]
    Thus Lemma \ref{lem::transfer1} gives
    \[\langle\Delta \widehat{T_Bf},\widehat{T_Bf}\rangle\leq C_0d_{\max}\int_M|\nabla_{\mathfrak{g}}f|_{\mathfrak{g}}^2\,dA_{\mathfrak{g}}.\]
    Since $T_Bf=(\phi_b(f))_{b\in B}$, we get
    \[\frac{\langle\Delta \widehat{T_Bf},\widehat{T_Bf}\rangle}{\sum_{b\in B}(T_Bf)(b)^2}\leq \frac{C_0d_{\max}\int_M|\nabla_{\mathfrak{g}}f|_{\mathfrak{g}}^2\,dA_{\mathfrak{g}}}{\sum_{b\in B}\phi_b(f)^2}.\]
    Substituting \eqref{eq::test F} and \eqref{eq::phi_b geq} into the right hand side of the above inequality, we get
    \[\frac{\langle\Delta \widehat{T_Bf},\widehat{T_Bf}\rangle}{\sum_{b\in B}(T_Bf)(b)^2}\leq 2C_0d_{\max}(\lambda_k(M,\mathfrak{g},\nu)+\varepsilon).\]
    Applying \eqref{eq::minmax R} to $T_B(F)$ and taking $\varepsilon\to0$, we obtain the  desired estimate.
\end{proof}

\section{Proof of Theorem \ref{thm::main}}\label{sec:6}

\begin{proof}[Proof of Theorem~\ref{thm::main}]
	The case $d_{\max}=0$ is trivial. We assume $d_{\max}>0$. In what follows, we retain the notation $C_P$ and $C_0$ for the universal
constants in Lemmas~\ref{lem::poincare and trace} and
\ref{lem::transfer1}, respectively.
	Fix $1\leq k\leq|B|$. Since $\nu$ is the finite Radon measure defined in \eqref{eq::d nu} which satisfies $\nu\ll dA_{\mathfrak{g}}$ and $\nu(M)=|B|$, by Theorem \ref{thm::eigenvalue of surface}, we have
    \begin{equation}\label{eq::lambda_k}
    	\lambda_k(M,\mathfrak{g},\nu)\leq A\frac{g+k}{|B|}.
    \end{equation}
    Suppose first that $\frac{g+k}{|B|}\leq(3AC_P)^{-1}$.
    Then \eqref{eq::lambda_k} gives $C_P\lambda_k(M,\mathfrak{g},\nu)\leq1/3<1/2$. Theorem \ref{thm::transfer} yields
    \[\sigma_k(G,B)\leq2C_0d_{\max}\lambda_k(M,\mathfrak{g},\nu)\leq2C_0Ad_{\max}\frac{g+k}{|B|}.\]
    In the remaining range, $\frac{g+k}{|B|}>(3AC_P)^{-1}$. Using Lemma \ref{lem::2D}, we get
    \[\sigma_k(G,B)\leq2d_{\max}<6AC_Pd_{\max}\frac{g+k}{|B|}.\]
    Setting $C=\max\{2C_0A,6AC_P\}$, we obtain the desired universal constant.
\end{proof}

\section*{Acknowledgements}
This work was supported by the National Natural Science Foundation of China (Grants Nos. 12425111, 12331013).

\subsection*{Declaration on the Use of AI}
The key ideas and methods of this paper were developed by the authors, inspired by the previous work \cite{ZY26} and the approach of \cite{AC18}. AI tools were only employed to assist in simplifying the proof, check for errors and draw figures in this paper. All mathematical claims, proofs, and citations were independently verified by the authors, who assume full responsibility for any errors.

\end{document}